\title{Existence of solution for a nonlinear model of thermo-visco-plasticity}
\author{Leszek~Bartczak}
\address{University of Warsaw, The Faculty of Mathematics, Informatics and Mechanics}
\email{L.Bartczak@mini.pw.edu.pl}
\thanks{The first autor was supported by the National Science Centre (decision number DEC-2011/03/N/ST1/04223).} 
\author{Sebastian~Owczarek}
\address{Warsaw University of~Technology, Faculty of~Mathematics and~Information Science}
\email{S.Owczarek@mini.pw.edu.pl}
\newtheorem{twr}{Theorem}
\newtheorem{lem}{Lemma}
\newtheorem{wni}{Corollary}
\newtheorem{stw}{Proposition}
\theoremstyle{definition}
\theoremstyle{definition}
\newtheorem{uwa}{Remark}
\newcommand{\pis}[1]{\mathcal{#1}}
\newcommand{\Rz}{\mathbb{R}}
\newcommand{\Sy}{\mathbb{S}(3)}
\newcommand{\dyw}{\mathrm{div}_x}
\newcommand{\nab}{\nabla_x}
\newcommand{\eps}{\varepsilon}
\newcommand{\epsp}{\varepsilon^{pl}}
\newcommand{\epsps}{\varepsilon^{pl,\star}}
\newcommand{\norm}[1]{\big\|#1\big\|}
\newcommand{\normsq}[1]{\left\|#1\right\|_{L^s(0,T;L^q(\Omega))}}
\newcommand{\dt}{\partial_t}
\newcommand{\dtt}{\partial^2_t}
\newcommand{\IO}{\int_\Omega}
\newcommand{\I}{\mathbb{I}}
\newcommand{\Ld}{\Lambda}
\newcommand{\ths}{\theta^{\star}}
\newcommand{\thsr}{{\theta}^{\star}_\Delta}
\newcommand{\Sr}{{\sigma}_\Delta}
\newcommand{\ur}{{u}_\Delta}
\newcommand{\epspr}{{\eps}^{pl}_\Delta}
\newcommand{\thr}{{\theta}_\Delta}
\newcommand{\Lp}{L^{p}(Q_T)}
\newcommand{\Lpr}{{L^p(0,T;L^r(\Omega))}}
\newcommand{\Lq}{L^{q}(\Omega)}
\newcommand{\Lqb}{L^{\bar{q}}(\Omega)}
\newcommand{\Wq}{W^{1,q}(\Omega)}
\newcommand{\Wqg}{W^{1,q}_{\Gamma_0}(\Omega)}
\newcommand{\Wqb}{W^{1,{\bar{q}}}(\Omega)}
\newcommand{\pn}{\frac{\partial\theta}{\partial \vec{n}}}
\begin{document}

\begin{abstract}We study a thermodynamically consistent model describing phenomena in a visco-plastic metal subjected to temperature changes. We complete the model with the mixed boundary condition on displacement and stress and Neumann-type condition for temperature. The main result is an existence of solution.
\end{abstract}
\keywords{continuum mechanics;  viscoplastic deformation;  heat conduction; thermo-visco-plasticity}
\subjclass[2010]{35Q74; 35Q79; 74D10; 74F05}
\date{}
\maketitle
\section{Introduction and main result}In this paper we propose an extension of the nonlinear thermo-visco-elasticity system (the mechanical dissipation is not linearised) to include a plastic effects. Our motivation for current considerations were the result of Blanchard and Guib{\'e} \cite{blangui}. In \cite{blangui} the authors considered the nonlinear thermo-visco-elasticity system and the existence of renormalised solutions for parabolic equations with $L^1$ data was proved (see also \cite{blan} and \cite{blanmur}). The case of small strains is analysed. Following Che{\l}mi\'{n}ski \cite{che}, Che{\l}mi{\'n}ski and Racke \cite{cherack}, Duvaut and J.L. Lions \cite{duvlions}, Gwiazda, Klawe and \'{S}wierczewska-Gwiazda \cite{gwklsw}, Suquet \cite{suquet1,suquet2}, Temam \cite{temam,temam1} and many others, we assume that displacement evolution is so slow that an acceleration term is negligible, so we only look at quasistatic form of the balance of momentum. In the considered model the inelastic constitutive equation is given by a Lipschitz-continuous function. The mechanical problem is coupled with the heat conduction equation. Changes of the temperature influence the stresses and the domain of plastic behaviour of the considered material. On the other hand stresses occurring in the body influence the heat production.
This type of problems are studied from many points of view (see {\it e.g.} \cite{RouBart1}, \cite{chehomb}, \cite{Paoli} or \cite{RouBart2}). See also the articles \cite{owcz1} and \cite{owcz2}, where a poroplasticity models are investigated (a poroplasticity models have a similar structure to the linear thermo-plasticity). In this paper we try tackle a thermomechanically consistent model without the linearisation in neighborhood of the reference temperature (compare \cite{bartczak}, \cite{bartczak1}, \cite{bartczak2}, \cite{cherack}).

We consider the body occupies initially a domain $\Omega\in\Rz^3$ with smooth boundary $\partial\Omega$. Let $x\in\Omega$ denote the material point while $t\in \Rz_+$ the time. Additionally we will denote by $\Sy$ the set of $3\times 3$ symmetric matrices of real entries. The system of equations is written in the following form 
\renewcommand{\arraystretch}{1.4}
\begin{equation*}\left\{\begin{array}{rcl}
-\dyw\sigma(t,x)&=&b(t,x), \\
\sigma(t,x)&=&\pis{D}(\eps(u(t,x))-\epsp(t,x))+\pis{C}(\eps(\dt u(t,x)))-\phi(\theta(t,x))\I,\\
\dt\epsp(t,x)&=&\Lambda(\sigma(t,x),\theta(t,x)),\\
\dt\theta(t,x)-\kappa\Delta\theta(t,x)&=&-\phi(\theta(t,x))\dyw\dt u(t,x)+\dt\epsp(t,x)\cdot\sigma(t,x)\\
&&+ \pis{C}(\eps(\dt u(t,x)))\cdot \eps(\dt u(t,x)).
\end{array}\right. \tag{TP}
\end{equation*}
The first equation describes the balance of momentum in the quasistatic case. The function $\sigma\colon\Rz_+\times\Omega\rightarrow\Sy$ is the stress tensor. The vector function $b\colon\Rz_+\times\Omega\rightarrow\Rz^3$ is a given density of volume forces. The second equation is the generalization of Hooke's law in Kelvin-Voight constrain. The function $u\colon\Rz_+\times\Omega\rightarrow\Rz^3$ describes displacement. We denote by  $\eps(u):=\frac{1}{2}\left(\nab u+(\nab u)^T\right)$ the symmetrical gradient of displacement also called the linear Cauchy strain tensor or simply small-strain tensor. It follows directly from definition that $\eps(u)\in \Sy$. Additionally we denote the plastic strain by $\epsp\colon\Rz_+\times\Omega\rightarrow \Sy$. The function $\theta\colon\Rz_+\times\Omega\rightarrow\Rz$ is the temperature. The function $\phi\colon\Rz\rightarrow\Rz$ defines so called thermal part of stress. We assume that it is a sublinear continuous function {\it i.e.} for all $s\in\Rz$
$$
|\phi(s)|\leq |s|^\alpha,
$$
where $\alpha>0$ will be indicated later.
By $\I$ we denote the identity $3\times 3$ matrix. We assume that operators $\pis{D}\colon\Sy\rightarrow\Sy$ and $\pis{C}\colon\Sy\rightarrow\Sy$ are given linear, symmetric and positive definite. In the next formula the evolution in time of plastic strain is given by an ordinary differential equation. Here the function $\Lambda:\Sy\times\Rz\rightarrow\Sy$ is Lipschitz continuous with a respect to the first argument and H\"older continuous with a respect to the second one {\it i.e.} there exist constants $L_1,L_2>0$ such that for all $A,B\in\Sy$ and $\varphi,\psi\in\Rz$ it holds that
$$
|\Lambda(A,\varphi)-\Lambda(B,\psi)|\leq L_1|A-B|+L_2|\varphi-\psi|^\beta,
$$
where $\beta>0$ will be indicated later.

The considered model includes non-linearities which are not globally Lipschitz. This is due to the fact that on the right-hand side of the heat conduction equation there are terms $\phi(\theta)\dyw\dt u$ and $ \dt \epsp \cdot \sigma $. They are the worst troublemakers due to its integrability and, in some models, if the acceleration term $\dtt u$ is neglected also term $ \dt \epsp \cdot \sigma $ is omitted on the right-hand side of the heat conduction equation.

We complete the system above with boundary conditions
\begin{equation*}\left\{\begin{array}{rll}
u(t,x)&=0,&\; \text{for}\; (t,x)\in(0,T)\times\Gamma_0,\\
\sigma(t,x)\cdot\vec{n}(x)&=g(t,x),&\; \text{for}\; (t,x)\in(0,T)\times\Gamma_1,\\
\pn(t,x)&=h(t,x),&\; \text{for}\; (t,x)\in(0,T)\times\partial \Omega
\end{array}\right.\tag{BC}
\end{equation*}
where $\Gamma_0,\Gamma_1\subset\partial\Omega$ satysfy $\Gamma_0\cap\Gamma_1=\emptyset$, $\Gamma_0\cup\Gamma_1=\partial\Omega$ and $\pis{H}^2(\Gamma_0)>0$ (here $\pis{H}^2$ denotes two-dimmensional Hausdorff measure). Let $\Gamma_0$ is relatively closed and $\Gamma_1$ is relatively open in $\partial\Omega$. Additionally, we will required that the set $\Omega\cup\Gamma_1$ is regular in sense of Gr\"oger ({\it cf.} \cite{groger}) {\it i.e.}  the set $\Omega\cup\Gamma_1$ is bounded and for every $x\in\partial\Omega$ there exist an open neighborhood $U\subset\Rz^3$ of the point $x$ in $\Rz^3$ and a bijective Lipschitz map $\omega\colon U\rightarrow\omega(U)\subset\Rz^3$ such that $\omega^{-1}$ is also Lipschitz continuous map and that the set $\omega\left(U\cap(\Omega\cup\Gamma_1)\right)$ takes a one of the following forms:
\begin{align*}
E_1&:=\left\{y\in\Rz^3\colon|y|<1,y_3<0\right\},\\
E_2&:=\left\{y\in\Rz^3\colon|y|<1,y_3\leq 0\right\},\\
E_3&:=\left\{y\in E_2\colon y_3<0\,\text{or}\,y_1>0\right\}.
\end{align*}
Finally we set initial conditions as: 
\begin{equation*}\left\{\begin{array}{rcl}
\epsp(0,x)&=\epsp_{0}(x),&\; \text{for}\; x\in\Omega,\\
u(0,x)&=u_0(x),&\; \text{for}\; x\in\Omega,\\
\theta(0,x)&=\theta_{0}(x),&\; \text{for}\; x\in\Omega.\end{array}\right.\tag{IC}\\[2ex]
\end{equation*}
Now let us introduce the function space which is very useful in considering of the mixed boundary problems as follows:
$$
\Wqg:=\overline{\left\{u|_\Omega\colon u\in C^\infty_0(\Rz^3)\,\text{and}\, \operatorname{supp} u\cap\Gamma_0=\emptyset\right\}},
$$
where the closeness is taken with respect to standard norm in the Sobolev space $\Wq$ \textit{i.e.} for $u\in\Wq$ we have
$$
\norm{u}_{\Wq}:=\left(\IO|u|^q\,dx+\sum_{i=1}^3\IO|\partial_{x_i}u|^q\,dx\right)^{\sfrac{1}{q}}.
$$
The mixed boundary condition for elasticity was studied by R. Herzog, C. Meyer and G. Wachsmuth. We recall the main fact which we will exploit in the further consideration {\it i.e.} Theorem 1.1 in \cite{herzog}:
\begin{twr}Let define the following operator $\mathbf{D}_s\colon W^{1,s}_{\Gamma_0}\rightarrow \left(W^{1,s'}_{\Gamma_0}(\Omega)\right)^\star$ as follows:
$$
\left[\mathbf{D}_s(u);v\right]:=\IO\pis{D}(\eps(u))\cdot\eps(v)\,dx\quad\text{for}\;u\in W^{1,s}_{\Gamma_0},v\in W^{1,s'}_{\Gamma_0}(\Omega).
$$
There exists $q>2$ such that for all $s\in[2,q]$ the operator $\mathbf{D}_s$ is continuously invertible. Moreover the inverse is globally Lipschitz with a Lipschitz constant independent of $s\in[2,q]$.\label{twr:herzog}
\end{twr}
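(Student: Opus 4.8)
The plan is to bootstrap from the Hilbert-space exponent $s=2$ up to exponents slightly above $2$ by a Gröger-type higher-integrability argument, carefully tracking constants so that they stay bounded as $s\downarrow 2$. First I would settle $s=2$. On $W^{1,2}_{\Gamma_0}(\Omega)$ the bilinear form $a(u,v):=\IO\pis{D}(\eps(u))\cdot\eps(v)\,dx$ is bounded, and since $\pis{D}$ is symmetric and positive definite and $\pis{H}^2(\Gamma_0)>0$, Korn's inequality gives $a(u,u)\geq c_K\norm{u}^2_{W^{1,2}(\Omega)}$ for $u\in W^{1,2}_{\Gamma_0}(\Omega)$. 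By Lax--Milgram, $\mathbf{D}_2$ is then a linear topological isomorphism of $W^{1,2}_{\Gamma_0}(\Omega)$ onto its dual, with $\norm{\mathbf{D}_2^{-1}}\leq M_2:=1/c_K$.

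Next I would prove the higher-integrability step. For $s\in[2,q]$ one has $s'\leq 2$, hence $W^{1,2}_{\Gamma_0}(\Omega)\hookrightarrow W^{1,s'}_{\Gamma_0}(\Omega)$ and therefore $\bigl(W^{1,s'}_{\Gamma_0}(\Omega)\bigr)^\star\hookrightarrow\bigl(W^{1,2}_{\Gamma_0}(\Omega)\bigr)^\star$; the goal is to show that $\mathbf{D}_2^{-1}$ maps $\bigl(W^{1,s'}_{\Gamma_0}(\Omega)\bigr)^\star$ boundedly into $W^{1,s}_{\Gamma_0}(\Omega)$ with operator norm uniform in $s$. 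I would cover $\overline{\Omega}$ by finitely many Gröger charts $\omega\colon U\to\omega(U)$, take a subordinate partition of unity, and transplant the equation $\mathbf{D}_2u=f$, localized to each chart, onto the model sets $E_1,E_2,E_3$. On each model set the constant-coefficient elasticity operator obtained by freezing $\pis{D}$ at the chart centre satisfies the classical Calderón--Zygmund/Meyers $W^{1,p}$-estimate — via reflection across the flat part of the boundary together with the known interior and half-space estimates — with a constant that converges to its $p=2$ value as $p\downarrow 2$. The discrepancy between the true localized operator and the frozen model operator is small in $\mathcal{L}\bigl(W^{1,p}_{\Gamma_0},(W^{1,p'}_{\Gamma_0})^\star\bigr)$ once the charts are small and $p$ is close to $2$ (here $\pis{D}$ is constant, so the only perturbation is the Lipschitz change of variables), so a Neumann-series argument yields a bounded inverse on $W^{1,p}_{\Gamma_0}$ for $p\in[2,p_0]$ with norm $\leq M_2/(1-M_2\,\omega(p))$, where $\omega(p)\to 0$ as $p\downarrow 2$. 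Choosing $q\in(2,p_0]$ with $M_2\,\omega(q)\leq\tfrac12$ delivers the claim with uniform bound $2M_2$.

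Finally I would assemble the isomorphism property of $\mathbf{D}_s$ itself. Injectivity is inherited from $s=2$: if $u\in W^{1,s}_{\Gamma_0}(\Omega)$ and $\mathbf{D}_su=0$ in $\bigl(W^{1,s'}_{\Gamma_0}(\Omega)\bigr)^\star$, then testing against $v\in W^{1,2}_{\Gamma_0}(\Omega)\subseteq W^{1,s'}_{\Gamma_0}(\Omega)$ gives $\mathbf{D}_2u=0$, hence $u=0$. For surjectivity, given $f\in\bigl(W^{1,s'}_{\Gamma_0}(\Omega)\bigr)^\star$ set $u:=\mathbf{D}_2^{-1}f$; the previous step puts $u\in W^{1,s}_{\Gamma_0}(\Omega)$ with $\norm{u}_{W^{1,s}(\Omega)}\leq 2M_2\norm{f}$ (norm in the dual space), and $u$ solves $\mathbf{D}_su=f$. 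Thus $\mathbf{D}_s$ is continuously invertible with $\norm{\mathbf{D}_s^{-1}}\leq 2M_2$ for every $s\in[2,q]$; since $\pis{D}$ is linear, $\mathbf{D}_s^{-1}$ is linear and bounded, hence globally Lipschitz with Lipschitz constant $\leq 2M_2$, independent of $s\in[2,q]$.

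The main obstacle is the higher-integrability step: establishing the $W^{1,p}$-regularity up to the mixed-boundary configuration modelled by $E_3$ — the ``crack tip'' where $\Gamma_0$ abuts $\Gamma_1$ — and, above all, obtaining it with constants that remain bounded as $p\downarrow 2$, since that uniformity is precisely what the Neumann-series step and the final uniform Lipschitz bound depend on. For the scalar analogue this is Gröger's theorem \cite{groger}; the vectorial elasticity setting additionally requires a Korn inequality valid uniformly on the rescaled charts.
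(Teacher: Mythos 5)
First, a point of order: the paper does not prove this statement at all --- it is quoted verbatim as Theorem 1.1 of \cite{herzog} (Herzog, Meyer and Wachsmuth) and used as a black box, so there is no in-paper proof to compare against. Your overall strategy --- Lax--Milgram plus Korn at $s=2$, then localization to the Gr\"oger model sets $E_1,E_2,E_3$ and a perturbation argument pushing invertibility to some $q>2$ with uniform constants --- is indeed the route taken in \cite{herzog}, and before that by Gr\"oger \cite{groger} for scalar equations; your final assembly (injectivity and surjectivity of $\mathbf{D}_s$ deduced from the regularity of $\mathbf{D}_2^{-1}$, linearity turning the uniform bound into the uniform Lipschitz constant) is correct.

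The genuine gap is in the higher-integrability step, and it sits exactly where the theorem is nontrivial. You claim that the discrepancy between the true localized operator and the frozen model operator becomes small ``once the charts are small and $p$ is close to $2$''. Shrinking a Gr\"oger chart does not make its distortion small: $\omega$ is only bi-Lipschitz, its Jacobian is bounded above and below but nowhere near the identity, and these bounds are scale-invariant. After transplanting the form onto the model sets you therefore face a uniformly elliptic system with merely $L^\infty$ coefficients that are an $O(1)$ --- not $o(1)$ --- perturbation of the constant-coefficient model, so the Neumann series as you set it up has no reason to converge. (If $\pis{D}$ depended continuously on $x$, freezing would handle \emph{that} dependence; the obstruction here is the chart itself.) The actual mechanism in Gr\"oger's theorem is different: one shows that $\mathrm{Id}-\lambda^{-1}J_p^{-1}\mathbf{D}_p$, with $J_p$ the duality map of $W^{1,p}_{\Gamma_0}(\Omega)$, is a strict contraction at $p=2$ by ellipticity alone (contraction constant $\sqrt{1-m^2/M^2}$ in terms of the coercivity and boundedness constants), proves that the relevant operator norms depend continuously on $p$, and concludes that the contraction persists on some interval $[2,q]$ --- which is also precisely what makes $\norm{\mathbf{D}_s^{-1}}$ uniform in $s$. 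A second, smaller soft spot: on the mixed model set $E_3$, where $\Gamma_0$ abuts $\Gamma_1$, a simple reflection does not yield the $W^{1,p}$ estimate; this case needs the dedicated argument of \cite{groger}, extended to the elasticity system (with a Korn inequality uniform over the charts) in \cite{herzog}, exactly as you flag at the end.
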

\begin{uwa}It is worth to notice that it follows from the proof of the theorem mentioned above the constant $q$ depends only on a geometry of the domain $\Omega$ (also on the partition of the boundary) and entries of the operator $\pis{D}$. 

\noindent In the further investigation we will treat the constant $q$ ad fixed.
\end{uwa}
%
%
Now we formulate the main framework for further consideration. 
Let parameters $p,r,s,\alpha,\beta\in\Rz$ satisfy the following relations:
\begin{equation}
\left\{\begin{array}{rcl}
\infty&>&p,r>1,\\
\sfrac{1}{2}&>&\alpha,\beta>0,\\
r&\geq& \max\{\alpha,\beta\}\cdot q,\\
p&\geq&\max\{\alpha,\beta\}\cdot s.
\end{array}\right.
\label{eq:parametry}
\end{equation}
For $p,q,r,s,\alpha,\beta$ indicated above we introduce assumption on the given data:
\begin{itemize}
\item[{\bf A1}] Let $b\in L^s(0,T;L^q(\Omega;\Rz^3))$,
\item[{\bf A2}] Let $g\in W^{1,s}(0,T;W^{-1/q}(0,T)(\Gamma_1;\Rz^3))$,
\item[{\bf A3}] Let $h\in F^{(r-1)/(2r)}_{pr}(0,T;L^{q} (\partial\Omega)) \cap L^p(0,T;W^{1-1/r,r}(\partial\Omega))$,
\item[{\bf A4}] Let $\epsp_0\in W^{1,{q}}(\Omega;\Sy)$,
\item[{\bf A5}] Let $u_0\in W^{1,q}_{\Gamma_0}(\Omega;\Rz^3)$,
\item[{\bf A6}] Let $\theta_0\in B^{2(1-1/p)}_{rp}(\Omega)$.
\end{itemize}
Additionally in the case $r>3$ we have to assume a compatibility condition in the following form:
\begin{equation*}
\frac{\partial\theta_0(x)}{\partial \vec{n}}=h(0,x),\qquad\text{for}\;x\in\partial\Omega.\tag{CC}
\end{equation*}
For details see {\it e.g.} \cite{pruss}. 

The main theorem of current paper is
\begin{twr} Let the assumptions {\bf A1}-{\bf A6} are satisfy. Additionally if $r>3$ then we assume a compatibility condition (CC). Then there exists solution to the system (TP)+(BC)+(IC) $(u,\epsp,\theta)$ such that
\begin{align*}
u&\in W^{1,s}(0,T;\Wqg);\\
\epsp&\in W^{1,s}(0,T;\Lq);\\
\theta&\in L^r(0,T;W^{1,r}(\Omega))\cap W^{1,p}(0,T;L^r(\Omega).
\end{align*}
\label{twr:main}
\end{twr}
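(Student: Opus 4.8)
The argument is a Schauder fixed point scheme that decouples the quasistatic mechanical block (the first three equations of (TP)) from the heat equation. Put $Z:=L^r(0,T;W^{1,r}(\Omega))\cap W^{1,p}(0,T;L^r(\Omega))$; since $W^{1,p}(0,T;L^r(\Omega))\hookrightarrow C([0,T];L^r(\Omega))$, one has $Z\hookrightarrow C([0,T];L^r(\Omega))$. For $\bar\theta$ in a ball $K:=\{\vartheta\in Z:\|\vartheta\|_Z\le\rho\}$, with $\rho$ to be fixed, define $\mathcal T(\bar\theta)$ in two steps: first solve the mechanical block with $\theta$ frozen to $\bar\theta$, obtaining $(u,\epsp,\sigma)$; then insert these into the right-hand side of the heat equation and let $\theta=\mathcal T(\bar\theta)$ solve the resulting linear Neumann parabolic problem with data $h,\theta_0$. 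A fixed point $\theta^{\star}=\mathcal T(\theta^{\star})$ yields a triple $(u^{\star},\epsp^{\star},\theta^{\star})$ solving (TP)+(BC)+(IC) in the natural sense dictated by the asserted regularity (the momentum balance weakly, the heat equation as a strong $L^p$--$L^r$ solution). As $\phi$ is merely continuous and no uniqueness is claimed, a compactness-type fixed point theorem, not a contraction, is the natural tool.

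\emph{The mechanical block.} For fixed $\bar\theta\in K$ write the balance of momentum together with the generalized Hooke law weakly: for almost every $t$ find $u(t)\in\Wqg$ such that
\[
\IO\bigl(\pis{D}\eps(u)+\pis{C}\eps(\dt u)\bigr)\cdot\eps(v)\,dx=\langle b,v\rangle+\langle g,v\rangle_{\Gamma_1}+\IO\bigl(\pis{D}\epsp+\phi(\bar\theta)\I\bigr)\cdot\eps(v)\,dx
\]
holds for every $v\in W^{1,q'}_{\Gamma_0}(\Omega)$. By Theorem~\ref{twr:herzog} applied to $\pis{C}$ (legitimate by the Remark following it, $q$ being shrunk if needed), the realization $\mathbf{C}$ of $\pis{C}$ is boundedly invertible from $\bigl(W^{1,q'}_{\Gamma_0}(\Omega)\bigr)^{\star}$ onto $\Wqg$, while the realization $\mathbf{D}$ of $\pis{D}$ maps $\Wqg$ boundedly into $\bigl(W^{1,q'}_{\Gamma_0}(\Omega)\bigr)^{\star}$; hence the identity above is equivalent to the linear ODE $\dt u=\mathbf{C}^{-1}\bigl(F(t)-\mathbf{D}u\bigr)$ in $\Wqg$, with $F$ collecting $b$, $g$, $\pis{D}\epsp$ and $\phi(\bar\theta)\I$. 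Coupled with the flow rule $\dt\epsp=\Lambda(\sigma,\bar\theta)$, where $\sigma$ (read off from Hooke's law after substituting $\eps(\dt u)$) is affine in $(u,\epsp)$, this is an ODE system in $\Wqg\times\Lq$ whose right-hand side is affine in $u$ and globally Lipschitz in $\epsp$ (using that $\Lambda$ is Lipschitz in its first slot; the H\"older dependence on $\bar\theta$ enters only as forcing). Picard--Lindel\"of gives a unique solution on $[0,T]$, and the time integrability of the data ($b\in L^s(0,T;L^q(\Omega))$ by \textbf{A1}; $g$ by \textbf{A2}; $\epsp_0,u_0$ by \textbf{A4}--\textbf{A5}; and $\phi(\bar\theta),|\bar\theta|^{\beta}\in L^{\infty}(0,T;\Lq)$ since $Z\hookrightarrow C([0,T];L^r(\Omega))$ and $r\ge\max\{\alpha,\beta\}q$) yields
\[
u\in W^{1,s}(0,T;\Wqg),\qquad\epsp\in W^{1,s}(0,T;\Lq),\qquad\sigma\in L^s(0,T;\Lq),
\]
with norms bounded, by Gr\"onwall, by $C\bigl(1+\|\bar\theta\|_Z^{\max\{\alpha,\beta\}}\bigr)$, uniformly for $\bar\theta\in K$.

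\emph{The heat block and the self-mapping.} Now set
\[
R:=-\phi(\bar\theta)\,\dyw\dt u+\dt\epsp\cdot\sigma+\pis{C}\eps(\dt u)\cdot\eps(\dt u)
\]
and solve $\dt\theta-\kappa\Delta\theta=R$, $\pn=h$, $\theta(0,\cdot)=\theta_0$ by maximal parabolic regularity for the Neumann Laplacian in the anisotropic $L^p$--$L^r$ scale: \textbf{A3}, \textbf{A6} and, when $r>3$, (CC) are exactly the required trace and compatibility conditions, and by \cite{pruss} the solution lies in a space continuously embedded into $Z$ with $\|\theta\|_Z\le C\bigl(1+\|R\|_{\ast}+\|h\|+\|\theta_0\|\bigr)$, where $\|\cdot\|_{\ast}$ is the norm of the source space. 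The decisive point is to bound $\|R\|_{\ast}$. The mechanical fields $\eps(\dt u)$, $\dyw\dt u$, $\dt\epsp$ and $\sigma$ all belong to $L^s(0,T;\Lq)$, so H\"older gives $\dt\epsp\cdot\sigma,\ \pis{C}\eps(\dt u)\cdot\eps(\dt u)\in L^{s/2}(0,T;L^{q/2}(\Omega))$, while $\phi(\bar\theta)\,\dyw\dt u$ is controlled through $\phi(\bar\theta)\in L^{\infty}(0,T;L^{r/\alpha}(\Omega))$; the relations \eqref{eq:parametry} (with $\Omega$ and $(0,T)$ bounded) are calibrated precisely so that each of these lands in $\|\cdot\|_{\ast}$. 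Moreover $u,\epsp,\sigma$ depend on $\bar\theta$ only through $\phi(\bar\theta)$, and $|\phi(s)|\le|s|^{\alpha}$ and $|\Lambda(A,\varphi)-\Lambda(A,\psi)|\le L_2|\varphi-\psi|^{\beta}$ with $\alpha,\beta<\tfrac12$, so one gets the sublinear bound $\|R\|_{\ast}\le C\bigl(1+\|\bar\theta\|_Z^{\gamma}\bigr)$ with $\gamma:=2\max\{\alpha,\beta\}<1$. Hence $\|\mathcal T(\bar\theta)\|_Z\le C_0+C_1\|\bar\theta\|_Z^{\gamma}$, and taking $\rho$ large enough gives $\mathcal T(K)\subseteq K$.

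\emph{Compactness and conclusion.} $K$ is convex and, being bounded in $Z$, is compact in $L^r(0,T;L^r(\Omega))$ by the Aubin--Lions lemma ($W^{1,r}(\Omega)\Subset L^r(\Omega)$, $\dt\vartheta\in L^p(0,T;L^r(\Omega))$ for $\vartheta\in Z$, and $r<\infty$). Endowing $K$ with the $L^r(0,T;L^r(\Omega))$ topology, $\mathcal T$ is continuous: the Nemytskii operator $\vartheta\mapsto\phi(\vartheta)$ is continuous between the relevant Lebesgue spaces (Krasnoselskii, via $|\phi(s)|\le|s|^{\alpha}$), the mechanical solution depends continuously on this forcing, and the maximal-regularity solution operator is bounded and linear in $R$. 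Schauder's fixed point theorem then furnishes $\theta^{\star}\in K$ with $\mathcal T(\theta^{\star})=\theta^{\star}$, and the corresponding triple has the asserted regularity. The step I expect to be the real obstacle is the bound on $R$ in the source space: the dissipation $\pis{C}\eps(\dt u)\cdot\eps(\dt u)$ and the plastic work $\dt\epsp\cdot\sigma$ are only borderline integrable, and it is solely the sublinear growth of $\phi$ and of the temperature modulus of $\Lambda$ (the conditions $\alpha,\beta<\tfrac12$), together with the calibration \eqref{eq:parametry}, that makes both the self-map and the a priori bound close; no $L^{\infty}$ control of $\theta$ being available is precisely why one must work in the $L^p$--$L^r$ scale rather than with an energy identity.
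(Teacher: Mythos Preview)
Your proposal is correct and follows essentially the same strategy as the paper: a Schauder fixed point in the temperature variable, decoupling (TP) into a mechanical block (solved by a Banach/Picard argument using Theorem~\ref{twr:herzog}) and a linear heat equation (handled by maximal $L^p$--$L^r$ parabolic regularity from \cite{pruss}), with the self-mapping closing precisely because $2\max\{\alpha,\beta\}<1$ and compactness coming from Aubin--Lions. The only cosmetic differences are that the paper runs the fixed point directly in $L^p(0,T;L^r(\Omega))$ (rather than on a $Z$-ball endowed with a weaker topology) and spells out the mechanical step as two nested contraction arguments (Corollary~\ref{wni:LE} then Lemma~\ref{lem:ap1}) instead of your ODE-in-Banach-space formulation.
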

Theorem \ref{twr:main} prestent the existence result for the nonlinear thermo-visco-plsticity model with the mixed boundary conditions on displacement.
From the applications point of view the mixed boundary condition is very important. Moreover our motivation for the growth assumption on the function $\phi$ was taken from article \cite{blangui} in which the authors assume that the function $\phi$ satisfies $|\phi(r)|\leq C(1+|r|)^{\frac{1}{2}}$ for all $r\in\Rz_{-}$ and $C>0$. The main idea of the proof of Theorem \ref{twr:main} is a fixed point argument. 

%
%
%
%

%
%

%
%
\section{Existence of solution to TP}
In this section we are going to prove that there exists a solution $(u,\epsp,\theta)$ defined on $(0,T)\times\Omega$. We will handle with the problem of the existence using Schauder fixed point theorem. Before we start with the proof of Theorem \ref{twr:main}, we prove the following corollary comes from Theorem \ref{twr:herzog}.
\begin{wni}From Theorem \ref{twr:herzog} we can deduce a solvability to a problem of linear elasticity {\it i.e.} let $b\in W^{1,s}(0,T;(W^{1,q'}(\Omega;\Rz^3))^*)$ while $g\in W^{1,s}(0,T;W^{-1/q}(0,T)(\Gamma_1;\Rz^3))$. Moreover let $u_0\in\Wqg$. Then there exists a unique solution $u\in W^{1,s}(0,T;\Wqg)$ to the following problem
\begin{equation*}\left\{\begin{array}{rlrl}
-\dyw\pis{D}(\eps(u(t,x)))-\dyw\pis{C}(\eps(\dt u(t,x)))&=b(t,x),&\qquad \text{for}&\;(t,x)\in(0,T)\times\Omega,\\
u(t,x)&=0,&\; \text{in}&\; (t,x)\in(0,T)\times\Gamma_0,\\
\left[\pis{D}(\eps(u(t,x)))+\pis{C}(\eps(\dt u(t,x)))\right]\cdot\vec{n}(x)&=g(t,x),&\; \text{for}&\; (t,x)\in(0,T)\times\Gamma_1,\\
u(0,x)&=u_0(x),&\;\text{for}&\; x\in\Omega.
\end{array}\right. \tag{LE}
\end{equation*}
Furthermore the solution $u$ can be estimated as follows:
\begin{eqnarray}
\lefteqn{\norm{u}_{W^{1,s}(0,T;\Wqg}}\nonumber\\
&&\leq CT\left(e^{CT}+1\right)\left(\norm{u_0}_{\Wqg}+ \norm{b}_{W^{1,s}(0,T;(W^{1,q'}(\Omega))^*)}+ \norm{g}_{W^{1,s}(0,T;W^{-1/q}(\Gamma_1))}\right),
\label{eq:LEestim}
\end{eqnarray}
where a constant $C>0$ depends only on a geometry of the domain $\Omega$ (also on the partition of the boundary) and entries of operators $\pis{D}$ and $\pis{C}$. 
\label{wni:LE}
\end{wni}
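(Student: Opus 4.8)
The plan is to derive the corollary from Theorem~\ref{twr:herzog} by an integrating-factor/semigroup argument in time. First I would fix $t$ and observe that the bilinear form associated to $\pis{D}+\pis{C}$ (both symmetric positive definite) satisfies the hypotheses of Theorem~\ref{twr:herzog}, so that for every $s\in[2,q]$ the operator $\mathbf{D}_s+\mathbf{C}_s\colon W^{1,s}_{\Gamma_0}\to(W^{1,s'}_{\Gamma_0}(\Omega))^\star$ (defined analogously with kernel $\pis{C}$) is continuously invertible with a Lipschitz-bounded inverse whose constant is uniform in $s$. Combining $-\dyw\pis{D}(\eps(u))$ with the Neumann-type datum $g$ on $\Gamma_1$ into a single functional $G(t)\in(W^{1,q'}_{\Gamma_0}(\Omega))^\star$ via the weak formulation $\IO\pis{D}(\eps(u))\cdot\eps(v)+\IO\pis{C}(\eps(\dt u))\cdot\eps(v)=\langle b(t),v\rangle+\langle g(t),v\rangle_{\Gamma_1}$, the system (LE) becomes, after applying the (bounded, linear) inverse $(\mathbf{D}_q)^{-1}$, an abstract linear ODE of the form $\dt u(t)=\mathcal{A}u(t)+f(t)$ in the Banach space $\Wqg$, where $\mathcal{A}:=-(\mathbf{C}_q)^{-1}\mathbf{D}_q$ is a bounded linear operator on $\Wqg$ and $f(t):=(\mathbf{C}_q)^{-1}(b(t)+g(t))$ lies in $W^{1,s}(0,T;\Wqg)$ by assumptions on $b,g$ together with the Lipschitz bound on the inverse.

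Next I would solve this abstract ODE by the variation-of-constants formula $u(t)=e^{t\mathcal{A}}u_0+\int_0^t e^{(t-\tau)\mathcal{A}}f(\tau)\,d\tau$, which makes sense since $\mathcal{A}$ is bounded and hence generates a uniformly continuous group $e^{t\mathcal{A}}$ on $\Wqg$ with $\|e^{t\mathcal{A}}\|\le e^{C t}$ for $C=\|\mathcal{A}\|$; uniqueness is immediate from Gr\"onwall. To get the regularity $u\in W^{1,s}(0,T;\Wqg)$ I would note that $u$ given by the formula is $\Wqg$-valued and continuous, hence $\mathcal{A}u+f\in L^s(0,T;\Wqg)$, so $\dt u\in L^s(0,T;\Wqg)$ by the ODE itself; a bootstrap is unnecessary because $\mathcal{A}$ is bounded. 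The estimate \eqref{eq:LEestim} then follows by taking $\Wqg$ norms in the variation-of-constants formula: $\|u(t)\|\le e^{Ct}\|u_0\|+\int_0^t e^{C(t-\tau)}\|f(\tau)\|\,d\tau$, bounding $\|f\|_{L^s}$ and $\|\dt f\|_{L^s}$ by $C(\|b\|_{W^{1,s}(0,T;(W^{1,q'})^*)}+\|g\|_{W^{1,s}(0,T;W^{-1/q}(\Gamma_1))})$ through the uniform Lipschitz constant of the inverse, and absorbing the time integration into the factor $CT(e^{CT}+1)$; differentiating the formula and estimating $\dt u$ similarly closes the $W^{1,s}$-in-time bound.

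The one genuinely delicate point — and the step I expect to be the main obstacle — is the reduction of the mixed PDE to a clean abstract ODE on the fixed space $\Wqg$ with a \emph{bounded} operator: one must check that $\mathbf{C}_q$ (not merely $\mathbf{D}_q$) is covered by Theorem~\ref{twr:herzog}, which requires $\pis{C}$ to satisfy exactly the same structural hypotheses as $\pis{D}$ (it does, by assumption), and that the resulting composition $(\mathbf{C}_q)^{-1}\mathbf{D}_q$ maps $\Wqg$ to $\Wqg$ rather than only into the larger dual space. This is where the precise mapping properties in Theorem~\ref{twr:herzog} — continuous invertibility onto the full dual, with $\Wqg\hookrightarrow(W^{1,q'}_{\Gamma_0}(\Omega))^\star$ — are used, together with the fact that $\mathbf{D}_q$ is a bounded operator $\Wqg\to(W^{1,q'}_{\Gamma_0})^\star$. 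Once the abstract formulation is in place the remaining arguments are standard linear-ODE-in-Banach-space manipulations, and the explicit form of the constant in \eqref{eq:LEestim} is simply the bookkeeping of $\|\mathcal{A}\|$, the uniform Lipschitz constant from Theorem~\ref{twr:herzog}, and the length $T$ of the time interval.
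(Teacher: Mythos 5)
Your proposal is correct and follows essentially the same route as the paper: both hinge on applying Theorem~\ref{twr:herzog} to the viscous operator $\pis{C}$ to invert $-\dyw\pis{C}(\eps(\cdot))$ under the mixed boundary conditions, thereby recasting (LE) as a linear ODE in $\Wqg$ with a bounded operator. The paper solves that ODE by a Banach fixed-point (Picard) iteration and derives \eqref{eq:LEestim} by integrating the system in time and applying Gronwall, whereas you write the variation-of-constants formula explicitly --- a cosmetic difference, with the same implicit caveat in both arguments that the exponent $q$ of Theorem~\ref{twr:herzog} must be taken to work for $\pis{C}$ as well as $\pis{D}$.
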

\begin{proof}The proof follows the Banach fixed point theorem. We are going to construct a contractive operator $\pis{P}\colon L^{s}((0,T);\Wqg)\rightarrow L^{s}((0,T);\Wqg)$. Let $v\in L^{s}((0,T);\Wqg)$ and we look for the solution to the following problem:
\begin{equation*}\left\{\begin{array}{rlrl}
-\dyw\pis{C}(\eps(w))&=b+\dyw\pis{D}(\eps(v)),&\qquad \text{for}&\;(0,T)\times\Omega,\\
w&=0,&\; \text{on}&\; (0,T)\times\Gamma_0,\\
\pis{C}(\eps(w))\cdot\vec{n}&=g+\pis{D}(\eps(v))\cdot\vec{n},&\; \text{on}&\; (0,T)\times\Gamma_1,\\
w|_{t=0}&=u_1,&\;\text{in}&\; \Omega.
\end{array}\right. \tag{$\star$}
\end{equation*}
Obviously we can uniquely solve the problem ($\star$) as a straightforward conclusion from Theorem \ref{twr:herzog}. Thus we put $u(t,x)=\pis{P}(v)(t,x):=\int_0^t w(\tau,x)\,d\tau+u_0(x)$. Now we insert $v_1,v_2\in L^{s}((0,T);\Wqg)$ into system ($\star$) and obtain the solutions $w_1,w_2\in L^{s}((0,T);\Wqg)$. Therefore the difference $w_1-w_2$ satisfies the following system:
\begin{equation*}\left\{\begin{array}{rlrl}
-\dyw\pis{C}(\eps(w_1-w_2))&=\dyw\pis{D}(\eps(v_1-v_2)),&\qquad \text{for}&\;(0,T)\times\Omega,\\
w_1-w_2&=0,&\; \text{on}&\; (0,T)\times\Gamma_0,\\
\pis{C}(\eps(w_1-w_2))\cdot\vec{n}&=\pis{D}(\eps(v_1-v_2))\cdot\vec{n},&\; \text{on}&\; (0,T)\times\Gamma_1,\\
w_1-w_2|_{t=0}&=0,&\;\text{in}&\; \Omega.
\end{array}\right. \tag{$\star\star$}
\end{equation*}
Then using the Theorem \ref{twr:herzog} we conclude that for all $t\in(0,T)$ it holds that:
\begin{eqnarray}
\lefteqn{\norm{u_1-u_2}_{L^{s}((0,t);\Wqg)}}\nonumber\\
&&=\norm{\int_0^t (w_1(\tau,x)-w_2(\tau,x))\,d\tau}_{L^{s}((0,t);\Wqg)}\leq Ct\norm{v_1-v_2}_{L^{s}((0,t);\Wqg)}
\label{eq:1stbanach}
\end{eqnarray}
and we can claim that the operator $\pis{P}_1\colon L^{s}((0,T_1);\Wqg)\rightarrow L^{s}((0,T_1);\Wqg)$ is a contraction for $T_1=\frac{1}{2C}$. Hence by the Banach fixed point theorem we obtain that there exists an unique element $u\in L^{s}((0,T_1);\Wqg)$ such that $\pis{P}_1(u)=u$. Moreover from the construction of the operator $\pis{P}$ we immediately have that also $\dt u\in L^{s}((0,T_1);\Wqg)$. One can see that the estimate \ref{eq:1stbanach} does not depend on the initial condition thus we can repeat the reasoning above to obtain a sequence of contractive operators $\pis{P}_k\colon L^{s}((T_{k-1},T_k);\Wqg)\rightarrow L^{s}((T_{k-1},T_k);\Wqg)$, where $T_k=\frac{k}{2C}$ and corresponding solutions  $u\in W^{1,s}((T_{k-1},T_k);\Wqg)$.

It remains to prove the estimate (\ref{eq:LEestim}). First we integrate the system (LE) with respect to time to obtain:
\begin{equation*}\left\{\begin{array}{rcl}
-\dyw\pis{C}(\eps(u(t,x)))&=&-\dyw\pis{C}(\eps(u_0(x)))+\int_0^t \dyw\pis{D}(\eps(u(\tau,x)))\,d\tau+ \int_0^t b(\tau,x)\,d\tau,\\
u(t,x)|_{\Gamma_0}&=&0,\\
\pis{C}(\eps(u(t,x)))|_{\Gamma_1}\cdot\vec{n}(x)&=&\left(\pis{C}(\eps(u_0(x)))-\int_0^t \pis{D}(\eps(u(\tau,x)))\,d\tau\right)|_{\Gamma_1}\cdot\vec{n}(x)+\int_0^tg(\tau,x)\,d\tau,\\
u(0,x)&=&u_0(x).
\end{array}\right.
\end{equation*}
Therefore using Theorem \ref{twr:herzog} for any $t\in(0,T) $ we obtain the following estimate:
\begin{equation*}
\norm{u(t)}_{W^{1,q}_{\Gamma_0}(\Omega)}\leq C\Bigg(\norm{u_0}_{W^{1,q}_{\Gamma_0}(\Omega)}+ \int_0^t\norm{u(\tau)}_{W^{1,q}_{\Gamma_0}(\Omega)}d\tau+ \int_0^t\norm{b(\tau)}_{L^q(\Omega)}d\tau+ \int_0^t\norm{g(\tau)}_{W^{-1/q}(\Gamma_1)}d\tau\Bigg).
\end{equation*}
Hence using Gronwall inequality we obtain that:
\begin{equation*}
\norm{u(t)}_{W^{1,q}_{\Gamma_0}(\Omega)}\leq C\left(e^{Ct}+1\right)\Bigg(\norm{u_0}_{W^{1,q}_{\Gamma_0}(\Omega)}+ \int_0^t\norm{b(\tau)}_{L^q(\Omega)}+ \norm{g(\tau)}_{W^{-1/q}(\Gamma_1)}d\tau\Bigg)
\end{equation*}
and
\begin{equation*}
\norm{u}_{L^s(0,T);W^{1,q}_{\Gamma_0}(\Omega))}\leq CT\left(e^{CT}+1\right)\Bigg(\norm{u_0}_{W^{1,q}_{\Gamma_0}(\Omega)}+ \norm{b}_{L^s(0,T;L^q(\Omega))}+ \norm{g}_{L^s(0,T;W^{-1/q}(\Gamma_1))}\Bigg)
\end{equation*}
Now for any $t\in(0,T)$ we can estimate a norm of the deformation velocity as follows:
\begin{equation*}
\norm{\dt u(t)}_{W^{1,q}_{\Gamma_0}(\Omega)}\leq C\Bigg(\norm{u(t)}_{W^{1,q}_{\Gamma_0}(\Omega)}+\norm{b(t)}_{L^q(\Omega)}+ \norm{g(t)}_{W^{-1/q}(\Gamma_1)}\Bigg),
\end{equation*}
therefore
\begin{equation*}
\norm{\dt u}_{L^s(0,T;W^{1,q}_{\Gamma_0}(\Omega))}\leq CT\left(e^{CT}+1\right)\Bigg(\norm{u_0}_{W^{1,q}_{\Gamma_0}(\Omega)}+ \norm{b}_{L^s(0,T;L^q(\Omega))}+ \norm{g}_{L^s(0,T;W^{-1/q}(\Gamma_1))}\Bigg)
\end{equation*}
and the proof is completed.
\end{proof}
%
%
\subsection{Fixed point} We shall construct the compact operator $\pis{T}\colon \Lpr\rightarrow\Lpr $. Let us fix a function $\ths\in \Lpr$ and consider the first auxiliary problem:
\begin{equation*}\left\{\begin{array}{rcl}
-\dyw\sigma(t,x)&=&b(t,x), \\
\sigma(t,x)&=&\pis{D}(\eps(u(t,x))-\epsp(t,x))+\pis{C}(\eps(\dt u(t,x)))-\phi(\ths(t,x))\I,\\
\dt\epsp(t,x)&=&\Ld(\sigma(t,x),\ths(t,x)), \\
u(t,x)|_{\Gamma_0\times(0,T)}&=&0,\\
\sigma(t,x)\cdot\vec{n}(x)|_{\Gamma_1\times(0,T)}&=&g(t,x),\\
\epsp(0,x)&=&\epsp_0(x),\\
u(0,x)&=&u_0(x).\end{array}\right.\tag{AP1}
\end{equation*}
\begin{lem}Assume that {\bf A1}-{\bf A2} and {\bf A5}-{\bf A6} are satisfied and moreover $\ths\in \Lpr$. Then there exists the unique solution $(\sigma, \epsp, u)$ to (AP1) satysfying $\sigma\in L^s(0,T;L^ {q}(\Omega;\Sy))$ and  $\epsp\in W^{1,s}(0,T;L^ {q}(\Omega;\Sy))$ while $u\in W^{1,s}(0,T;W^{1, {q}}(\Omega;\Rz^3))$. Additionally the following estimate holds
\begin{eqnarray}
\lefteqn{\norm{\sigma}_{L^\infty(0,T;L^ {q}(\Omega))}+\norm{\epsp}_{W^{1,s}(0,T;L^ {q}(\Omega))}+ \norm{u}_{W^{1,s}(0,T;\Wqg)}}\nonumber\\
&\quad&\leq E(T)\left(\norm{\ths}^{\beta}_{L^{\beta s}(0,T;L^{\beta q}(\Omega))}+ \norm{\ths}^{\alpha}_{L^{\alpha s}(0,T;L^{\alpha q}(\Omega))}+|\Omega||\Ld(0,0)|+\norm{\epsp_0}_{\Lq}\right.\label{eq:ap1a}\\
&&\phantom{E(T)}\left.\norm{u_0}_{\Wqg}+ \norm{b}_{W^{1,s}(0,T;\Lqb)}+ \norm{g}_{W^{1,s}(0,T;W^{-\frac{1}{ {q}}, {q}}(\partial\Omega))}\right).\nonumber
\end{eqnarray}
\label{lem:ap1}
\end{lem}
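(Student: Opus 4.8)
The plan is to solve (AP1) by a Banach fixed point argument in the spirit of Corollary~\ref{wni:LE}, decoupling the quasistatic elasticity system from the ordinary differential equation governing $\epsp$; the parameter $\ths\in\Lpr$ stays frozen throughout. Two elementary reductions come first. From $|\phi(s)|\le|s|^{\alpha}$ and the constraints $\alpha s\le p$, $\alpha q\le r$ in~\eqref{eq:parametry} one gets $\phi(\ths)\in L^s(0,T;\Lq)$ with $\norm{\phi(\ths)}_{L^s(0,T;\Lq)}\le\norm{\ths}^{\alpha}_{L^{\alpha s}(0,T;L^{\alpha q}(\Omega))}$; and from $|\Ld(A,\varphi)|\le|\Ld(0,0)|+L_1|A|+L_2|\varphi|^{\beta}$ together with $\beta s\le p$, $\beta q\le r$ one sees that $A\mapsto\Ld(A,\ths)$ maps $L^s(0,T;\Lq)$ into itself, the affine defect being controlled by $|\Omega|\,|\Ld(0,0)|$ and $\norm{\ths}^{\beta}_{L^{\beta s}(0,T;L^{\beta q}(\Omega))}$. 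These are exactly the quantities that must surface on the right of~\eqref{eq:ap1a}.

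Next I would set up the iteration on $C([0,T];\Lq)$. Given a candidate plastic strain $\eta$, the balance of momentum together with the constitutive law is, weakly, the linear elasticity problem of Corollary~\ref{wni:LE} whose load functional is the one built from $b$ and $g$ corrected by the two extra terms $v\mapsto-\IO\pis{D}(\eta)\cdot\eps(v)\,dx-\IO\phi(\ths)\,\dyw v\,dx$. Solving it produces $u=u[\eta]$; I then put $\sigma=\sigma[\eta]:=\pis{D}(\eps(u)-\eta)+\pis{C}(\eps(\dt u))-\phi(\ths)\I$ and $\pis{S}(\eta):=\epsp_0+\int_0^{\cdot}\Ld\bigl(\sigma[\eta](\tau),\ths(\tau)\bigr)\,d\tau$. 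One point needs care: the thermal stress $\phi(\ths)\I$ carries only $L^s$-in-time integrability, not $W^{1,s}$, so Corollary~\ref{wni:LE} cannot be quoted as a black box for it; its proof, however — a Banach fixed point on $L^s(0,T;\Wqg)$ followed by a Gr\"onwall estimate on the time-integrated equation, and Theorem~\ref{twr:herzog} applied to $\pis{C}$ — uses nothing stronger than $L^s$-in-time data, so I would just rerun that construction with the extra source present, obtaining $u\in W^{1,s}(0,T;\Wqg)$.

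For a contraction on a short interval I would argue as in that proof: with $\ths$ frozen, $u[\eta_1]-u[\eta_2]$ solves the time-integrated homogeneous elasticity equation whose only datum is $-\dyw\pis{D}(\eta_1-\eta_2)$, so Theorem~\ref{twr:herzog} for $\pis{C}$ and a Gr\"onwall estimate give, for $t\le T$,
\begin{equation*}
\norm{u[\eta_1]-u[\eta_2]}_{\Wqg}(t)+\norm{\dt u[\eta_1]-\dt u[\eta_2]}_{\Wqg}(t)\le C\bigl(1+te^{Ct}\bigr)\norm{\eta_1-\eta_2}_{C([0,t];\Lq)},
\end{equation*}
hence $\norm{\sigma[\eta_1]-\sigma[\eta_2]}_{\Lq}(t)\le C'\norm{\eta_1-\eta_2}_{C([0,t];\Lq)}$ with $C'$ independent of $t\in[0,T]$, and then, since $\Ld$ is $L_1$-Lipschitz in its first slot, $\norm{\pis{S}(\eta_1)-\pis{S}(\eta_2)}_{C([0,t];\Lq)}\le L_1C' t\,\norm{\eta_1-\eta_2}_{C([0,t];\Lq)}$, a contraction for $t<(L_1C')^{-1}$. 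As in Corollary~\ref{wni:LE} this threshold does not depend on the starting value, so concatenating finitely many short steps yields a unique global fixed point $\epsp\in C([0,T];\Lq)$, and $(u,\epsp,\sigma)$ solves (AP1); the same difference estimate applied to two putative solutions of (AP1) (the $\phi(\ths)$ terms cancel) gives uniqueness. At the fixed point $\dt\epsp=\Ld(\sigma,\ths)\in L^s(0,T;\Lq)$ by the first paragraph, so $\epsp\in W^{1,s}(0,T;\Lq)$, while $u\in W^{1,s}(0,T;\Wqg)$ and $\sigma\in L^s(0,T;\Lq)$ follow from the construction and the constitutive law.

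It remains to extract the quantitative bound~\eqref{eq:ap1a}. Here I would combine four inequalities valid for a.e.\ $t$: the Gr\"onwall bound for $\norm{u(t)}_{\Wqg}$ from Theorem~\ref{twr:herzog} applied to the time-integrated elasticity equation, its total datum being $\lesssim\norm{u_0}_{\Wqg}+\norm{b}_{W^{1,s}(0,T;\Lqb)}+\norm{g}_{W^{1,s}(0,T;W^{-1/q,q}(\partial\Omega))}+\int_0^t\bigl(\norm{u(\tau)}_{\Wqg}+\norm{\epsp(\tau)}_{\Lq}+\norm{\phi(\ths)(\tau)}_{\Lq}\bigr)d\tau$; the pointwise bound $\norm{\dt u(t)}_{\Wqg}\lesssim\norm{u(t)}_{\Wqg}+\norm{b(t)}_{\Lqb}+\norm{g(t)}_{W^{-1/q,q}(\partial\Omega)}+\norm{\epsp(t)}_{\Lq}+\norm{\phi(\ths)(t)}_{\Lq}$ (Theorem~\ref{twr:herzog} for $\pis{C}$); the constitutive bound $\norm{\sigma(t)}_{\Lq}\lesssim\norm{u(t)}_{\Wqg}+\norm{\dt u(t)}_{\Wqg}+\norm{\epsp(t)}_{\Lq}+\norm{\phi(\ths)(t)}_{\Lq}$; and $\norm{\epsp(t)}_{\Lq}\le\norm{\epsp_0}_{\Lq}+\int_0^t\norm{\Ld(\sigma(\tau),\ths(\tau))}_{\Lq}\,d\tau$. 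Substituting these into one another closes a single Gr\"onwall inequality for $t\mapsto\norm{u(t)}_{\Wqg}+\norm{\epsp(t)}_{\Lq}$; the bounds for $\dt u$, $\dt\epsp$ and $\sigma$ then follow, and taking the appropriate norms in time and inserting the first-paragraph estimates yields~\eqref{eq:ap1a} with $E(T)$ of the form $CT(e^{CT}+1)$. The step I expect to be the main obstacle is precisely the low time-regularity of $\phi(\ths)\I$ and the fact that $\pis{D}(\epsp)$ enters $\sigma$ undifferentiated in time: this is why Corollary~\ref{wni:LE} must be reopened at the level of its proof, and why the contraction is measured in $C([0,T];\Lq)$ (so that the time-integration in the flow rule supplies the small factor) rather than in $W^{1,s}(0,T;\Lq)$; everything else is bookkeeping with the exponent constraints~\eqref{eq:parametry}.
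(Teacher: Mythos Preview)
Your proposal is correct and follows essentially the same approach as the paper: a Banach fixed point on the plastic-strain variable, solving the linear elasticity problem with the candidate strain frozen via Corollary~\ref{wni:LE}/Theorem~\ref{twr:herzog}, defining $\sigma$ from the constitutive law and the new $\epsp$ by time-integrating $\Lambda$, and then closing a Gr\"onwall loop for the estimate. The only differences are cosmetic --- the paper runs the contraction in $L^s(0,T;\Lq)$ rather than your $C([0,T];\Lq)$, and it quotes Corollary~\ref{wni:LE} as a black box without pausing over the time-regularity issue you (rightly) flag for $\phi(\ths)$ and $\pis{D}(\epsps)$.
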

\begin{proof} We will again use the Banach fixed point theorem. Let $\epsps\in L^s(0,T;L^q(\Omega;\Sy))$ and we solve the linear elasticity problem in the form:
\begin{equation}
\left\{\begin{array}{rlrl}    
-\dyw\pis{D}(\eps(w))-\dyw\pis{C}(\eps(\dt w))&= \dyw\pis{D}(\epsps)+\nab\phi(\ths)+b,\quad&\text{on}\;& (0,T)\times \Omega,\\
w&=0, &\text{on}\;&(0,T)\times\Gamma_0,\\
\left[\pis{D}(\eps(w))+\pis{C}(\eps(\dt w))\right]\cdot\vec{n}&= \left[\pis{D}(\epsps)+\I\phi(\ths)\right]\cdot\vec{n}+ g,&\text{on}\;&(0,T)\times\Gamma_1.
\end{array}\right.
\label{eq:elast}
\end{equation}
By Corollary \ref{wni:LE} we obtain that there exists a unique solution $w\in W^{1,s}(0,T;W^{1,q}(\Omega;\Rz^3))$ satisfying the following estimate
\begin{eqnarray*}
\norm{w}_{W^{1,s}(0,T;\Wqg)}&\leq& CT\left(e^{CT}+1\right)\Big(\normsq{\epsps}+ \normsq{\phi(\ths)}\\
&&\phantom{ C\Big(}+\norm{u_0}_{\Wqg}+ \normsq{b}+ \norm{g}_{L^s(0,T;W^{-\frac{1}{q},q}(\partial\Omega))}\Big),
\end{eqnarray*}
where the constant $C>0$ depends on entries of operators $\pis{D}$, $\pis{C}$ and the geometry of the set $\Omega$. Let us denote $D(T):=CT\left(e^{CT}+1\right)$ and let us put $$\sigma=\pis{D}(\eps(w)-\epsps)+\pis{C}(\eps(\dt w))-\phi(\ths).$$ One sees that $\dyw\sigma=b$, thus obviously
\begin{eqnarray*}
\lefteqn{\normsq{\sigma}+\normsq{\dyw\sigma}}\\
&\leq& C\left(\norm{w}_{W^{1,s}(0,T; \Wqg )}+ \normsq{\epsps}\right)+\normsq{b}\\
&\leq& D(T)\Big(\normsq{\epsps}+ \normsq{\phi(\ths)}+\norm{u_0}_{\Wqg}+ \normsq{b}\\
&&\phantom{ D(T)\Big( }+ \norm{g}_{L^s(0,T;W^{-\frac{1}{q},q}(\partial\Omega))}\Big).
\end{eqnarray*}
We define $\epsp(t,x):=\int_0^t\Lambda(\sigma(x,\tau),\ths(x,\tau))d\tau+ \epsp_0(x)$, so we obtain the following estimate
\begin{eqnarray*}
\lefteqn{}\\
|\epsp(t,x)|&\leq&\int_0^t\left|\Ld(\sigma(x,\tau),\ths(x,\tau))\right|d\tau  +|\eps^p_0(x)|\\
&\leq&\int_0^t\left|\Ld(\sigma(x,\tau),\ths(x,\tau))-\Ld(0,0)\right|d\tau + t|\Ld(0,0)|+ |\eps^p_0(x)|\\
&\leq&\int_0^tL_1\left|\sigma(x,\tau)\right|+L_2\left|\ths(x,\tau))\right|^{\beta}d\tau + t|\Ld(0,0)|+  
|\eps^p_0(x)|.
\end{eqnarray*}
Hence we can estimate as follows
$$\normsq{\epsp}\leq CT\left(L_1\normsq{\sigma}+|\Omega||\Ld(0,0)|\right)+ L_2\norm{\ths}^{\beta}_{L^{\beta s}0,T;L^{\beta q}(\Omega)}+ \norm{\eps^p_0}_{\Lq}.
$$
Now we define the operator $\pis{R}:L^s(0,T;L^q(\Omega;\Sy))\rightarrow L^s(0,T;L^q(\Omega;\Sy))$ as $\pis{R}(\epsps):=\epsp$. We claim that we can choose such a short time interval that the operator $\pis{R}$ is a contraction. Indeed, suppose that $\epsps_1,\epsps_2\in L^s(0,T;L^q(\Omega;\Sy))$ while $w_1, w_2$ respectively are solutions to \ref{eq:elast} then we can obtain similarly as previously
\begin{eqnarray*}
\epsp_1(t,x)&=&\int_0^t\Ld(\sigma_1(x,\tau),\ths(x,\tau))d\tau +\eps^p_0(x),\\
\epsp_2(t,x)&=&\int_0^t\Ld(\sigma_2(x,\tau),\ths(x,\tau))d\tau +\eps^p_0(x),
\end{eqnarray*}
where $\sigma_i=\pis{D}(\eps(w_i)-\epsps_i)+\pis{C}(\eps(\dt w))-\phi(\ths)$ for $i=1,2$.
We subtract $\epsp_1-\epsp_2$ to obtain
$$\norm{\epsp_1(t)-\epsp_2(t)}_{\Lq}\leq\int_0^t\norm{\Ld(\sigma_1(\tau),\ths(\tau))-\Ld(\sigma_2(\tau),\ths(\tau))}_{\Lq}d\tau\leq L_1\int_0^t\norm{\sigma_1(\tau)-\sigma_2(\tau)}_{\Lq}d\tau
$$
hence
$$
\normsq{\epsp_1-\epsp_2}\leq L_1T\normsq{\sigma_1-\sigma_2}.
$$
Since $w_1$ and $w_2$ are solutions to \eqref{eq:elast} with terms $\epsps_1$ and $\epsps_2$ on the right hand side respectively. It follows from the linearity of the problem \eqref{eq:elast} that the difference $w_1-w_2$ satisfies
\begin{equation*}
\left\{\begin{array}{rlrl}
-\dyw\pis{D}(\eps(w_1-w_2))-\dyw\pis{C}(\eps(\dt (w_1- w_2)))&=-\dyw\pis{D}(\epsp_1-\epsp_2),&\text{in}\;& (0,T)\times\Omega,\\
w_1-w_2&=0,&\text{on}\;& (0,T)\times\Gamma_0,\\
\left[\pis{D}(\eps(w_1-w_2))+\pis{C}(\eps(\dt (w_1-w_2)))\right]\cdot\vec{n}&= \pis{D}(\epsps_1-\epsps_2)\cdot\vec{n},\quad&\text{on}\;& (0,T)\times\Gamma_1,\\
w_1-w_2&=0,&\text{on}\;& \Omega\times\{t=0\}\\
\end{array}\right.
\end{equation*}
and the following inequality holds
$$\norm{w_1-w_2}_{W^{1,s}(0,T; \Wq )}\leq D(T)\normsq{\epsps_1-\epsps_2},
$$
where the constant $D(T)$ is the same as previously. Hence
\begin{eqnarray*}
\lefteqn{\normsq{\sigma_1-\sigma_2}+\normsq{\dyw(\sigma_1-\sigma_2)}}\\
&&\leq C\left(\norm{w_1-w_2}_{W^{1,s}(0,T; \Wqg )}+ \normsq{\epsps_1-\epsps_2}\right)\\
&&\leq CD(T)\normsq{\epsps_1-\epsps_2}.
\end{eqnarray*}
Considering the difference $\pis{R}(\epsps_1)-\pis{R}(\epsps_2)$ we can obtain
\begin{equation}
\normsq{\pis{R}(\epsps_1)-\pis{R}(\epsps_2)}= \normsq{\epsp_1-\epsp_2} \leq CTD(T)\normsq{\epsps_1-\epsps_2},
\label{eq:contr1}
\end{equation}
where the constant $C>0$ does not depend on time $T$ and the initial data while $D(T)$ is the same as previously and also does not depend on initial data. Choosing properly small $T_1>0$ we get that $\pis{R}_1$ is a contraction on $L^s(0,T_1;L^q(\Omega;\Sy))$. Therefore by the Banach fixed point theorem we obtain that there exists $\epsp\in L^s(0,T_1;L^q(\Omega;\Sy))$ such that $\pis{R}_1(\epsp)=\epsp$ and the problem (AP1) possesses the unique solution $(\sigma, \epsp,u)\in L^s(0,T_1;L^q(\Omega;\Sy))\times L^s(0,T_1;L^q(\Omega;\Sy))\times W^{1,s}(0,T;\Wqg)$ on the time interval $(0,T_1)$. Using the reasoning analogous to the proof of Corollary \ref{wni:LE} we can extend our solution to the whole interval $(0,T)$ since the estimate \eqref{eq:contr1} is independent of the initial data.

It remains to prove the estimate \eqref{eq:ap1a}. Thus using the second equation in (AP1) we obtain the following inequalities
\begin{eqnarray}
\lefteqn{\norm{\sigma(t)}_{\Lq}}\nonumber\\
& \leq & C\left(\norm{u(t)}_{\Wqg}+\norm{\dt u(t)}_{\Wqg}+ \norm{\epsp(t)}_{\Lq}+\norm{\phi(\ths(t))}_{\Lq}\right)\nonumber\\
& \leq & \widetilde{D}(T)\left(\norm{\epsp(t)}_{\Lq}+ \norm{\phi(\ths(t))}_{\Lq} + \norm{u_0}_{\Wqg}\right.\nonumber\\
&&\left.\phantom{D(T)\Big(}+ \norm{\epsp_0}_{\Lq}+ \norm{b(t)}_{\Lq}+ \norm{g(t)}_{W^{-\frac{1}{q},q}(\partial\Omega)}\right).\nonumber\\
& \leq & \widetilde{D}(T)\left(L_1\int_0^t\norm{\sigma(\tau)}_{\Lq}+ L_2\norm{\ths(\tau)}^{\beta}_{L^{\beta q}(\Omega)}\,d\tau+ \norm{\ths(t)}^{\alpha}_{L^{\alpha q}(\Omega)} + t|\Omega||\Ld(0,0)|\right.\\
&&\left.\phantom{D(T)\Big(}+ \norm{u_0}_{\Wqg}+ \norm{\epsp_0}_{\Lq}+ \norm{b(t)}_{\Lq}+ \norm{g(t)}_{W^{-\frac{1}{q},q}(\partial\Omega)}\right).\nonumber
\end{eqnarray}
The Gronwall inequality applied to the expression above implies
\begin{eqnarray}
\norm{\sigma(t)}_{\Lq} &\leq& E_1(T)\left(\norm{\ths}^{\beta}_{L^{\beta s}(0,T;L^{\beta q}(\Omega))}+ \norm{\ths}^{\alpha}_{L^{\alpha s}(0,T;L^{\alpha q}(\Omega))} + |\Omega||\Ld(0,0)|+\norm{\epsp_0}_{\Lq}\right.\label{eq:sig1}\\
&&\phantom{E_1(T)\Big(}\left.+ \norm{u_0}_{\Wqg}+ \normsq{b}+  \norm{g}_{L^s(0,T;W^{-\frac{1}{q},q}(\partial\Omega))}\right)\nonumber
\end{eqnarray}
and immediately we obtain the required estimate for $\normsq{\sigma}$. To estimate term $\normsq{\dt\epsp}$ we treat as follows
\begin{eqnarray}
\lefteqn{\normsq{\dt\epsp}}\nonumber\\
&\leq&E_2(T)\left(L_1\normsq{\sigma}+|\Omega||\Ld(0,0)|\right)+ L_2\norm{\ths}^{\beta}_{L^{\beta s}(0,T;L^{\beta q}(\Omega))}+ \norm{\eps^p_0}_{\Lq}\nonumber\\
&\leq&E_3(T)\left(\norm{\ths}^{\beta}_{L^{\beta s}(0,T;L^{\beta q}(\Omega))}+ \norm{\ths}^{\alpha}_{L^{\alpha s}(0,T;L^{\alpha q}(\Omega))} + |\Omega||\Ld(0,0)|+\norm{\epsp_0}_{\Lq}\right.\label{eq:epsp1}\\
&&\phantom{E_3(T)\Big(}\left.+\norm{u_0}_{\Wqg}+ \normsq{b}+ \norm{g}_{L^s(0,T;W^{-\frac{1}{q},q}(\partial\Omega))}\right)\nonumber.
\end{eqnarray}
To obtain the estimate for $\norm{u}_{W^{1,s}(0,T;\Wqg}$ we once again use the Corollary \ref{wni:LE} as in problem \eqref{eq:elast}. Therefore
\begin{eqnarray}
\lefteqn{\norm{u}_{W^{1,s}(0,T;\Wqg)}}\nonumber\\
&\leq& D(T)\Bigg(\norm{\dt\epsp}_{L^s(0,T;L^q(\Omega))}+  \norm{\ths}^{\alpha}_{L^{\alpha s}(0,T;L^{\alpha q}(\Omega))}\nonumber\\
&&\phantom{D(T)\Bigg(}+ \normsq{b}+ \norm{g}_{W^{1,s}\infty(0,T;W^{-\frac{1}{q},q}(\partial\Omega))}+ \norm{u_0}_{\Wqg}\Bigg)\nonumber\\
&\leq& E(T)\Bigg(\norm{\ths}^{\beta}_{L^{\beta s}(0,T;L^{\beta q}(\Omega))}+ \norm{\ths}^{\alpha}_{L^{\alpha s}(0,T;L^{\alpha q}(\Omega))}+ |\Omega||\Ld(0,0)|+\norm{\epsp_0}_{\Lq}\label{eq:dtu}\\
&&\phantom{E(T)\Bigg(}+ \normsq{b}+  \norm{g}_{W^{1,s}(0,T;W^{-\frac{1}{q},q}(\partial\Omega))}+ \norm{u_0}_{\Wqg}\Bigg)\nonumber
\end{eqnarray}
The inequalities \eqref{eq:sig1} together with \eqref{eq:epsp1} and \eqref{eq:dtu} give us \eqref{eq:ap1a}.
\end{proof}
%
%
Next for $\ths\in L^p(0,T;L^r(\Omega))$ and $(\sigma,\epsp,u)$ the corresponding solution to (AP1) we solve the second auxiliary problem formulated as follows
\begin{equation*}\left\{\begin{array}{rcl}
\dt\theta(t,x)-\kappa\Delta\theta(t,x)&=&-\phi(\ths(t,x))\dyw\dt u(t,x)+\dt\epsp(t,x)\cdot\sigma(t,x)\\
&&+ \pis{C}(\eps(\dt u(t,x)))\cdot \eps(\dt u(t,x)),\\
\pn(t,x)|_{\partial\Omega\times(0,T)}&=&h(t,x),\\
\theta(x,0)&=&\theta_0(x).\end{array}\right.\tag{AP2}
\end{equation*}
The problem above is the linear heat conduction equation and its solvability is proved for example in \cite{pruss}. We only need to estimate a norm of the following therm:
$$
RHS:=-\phi(\ths)\dyw\dt u+\dt\epsp\cdot\sigma+ \pis{C}(\eps(\dt u))\cdot \eps(\dt u)
$$
in the space $L^p(0,T;L^r(\Omega))$.
\begin{lem}For $RHS$ defined as above it holds that
\begin{equation*}
\norm{RHS}_{L^p(0,T;L^r(\Omega))}\leq D\left(\norm{\ths}^{2\beta}_{L^{p}(0,T;L^{r}(\Omega))}+ \norm{\ths}^{2\alpha}_{L^{p}(0,T;L^{r}(\Omega))}+ \norm{\ths}^{\beta+\alpha}_{L^{p}(0,T;L^{r}(\Omega))}+1\right),
\end{equation*}
where constant $D>0$ depends only on a length of a time interval $[0,T]$ and given data.
\label{lem:RHS}
\end{lem}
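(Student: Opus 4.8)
The plan is to estimate the three summands of $RHS$ one at a time. For each of them I would apply H\"older's inequality, first in the space variable $x$ and then in the time variable $t$, distributing the exponents between the two factors so that the product lands in $\Lpr$; the factors coming from the elastic triple $(\sigma,\epsp,u)$ are then controlled through Lemma \ref{lem:ap1}, while the factors built out of $\ths$ are controlled through the sublinear bound $|\phi(s)|\le|s|^{\alpha}$ together with the structural relations \eqref{eq:parametry}.

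Concretely, for the thermal term $-\phi(\ths)\dyw\dt u$ I would start from $|\phi(\ths)\,\dyw\dt u|\le|\ths|^{\alpha}\,|\dyw\dt u|$. By Lemma \ref{lem:ap1}, $\dt u\in L^s(0,T;\Wqg)$, hence $\dyw\dt u\in L^s(0,T;\Lq)$ with
\[
\normsq{\dyw\dt u}\le E(T)\Big(\norm{\ths}^{\beta}_{L^{\beta s}(0,T;L^{\beta q}(\Omega))}+\norm{\ths}^{\alpha}_{L^{\alpha s}(0,T;L^{\alpha q}(\Omega))}+\text{(data)}\Big);
\]
since $\Omega$ and $(0,T)$ have finite measure and $r\ge\max\{\alpha,\beta\}q$, $p\ge\max\{\alpha,\beta\}s$, one has the embeddings $\Lpr\hookrightarrow L^{\gamma s}(0,T;L^{\gamma q}(\Omega))$ for $\gamma\in\{\alpha,\beta\}$, so every norm of $\ths$ on the right may be replaced by a power of $\norm{\ths}_{\Lpr}$ up to a constant depending on $T$ and $|\Omega|$. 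Applying H\"older to $|\ths|^{\alpha}\cdot|\dyw\dt u|$ with exponents adapted to $\tfrac1r$ in space and $\tfrac1p$ in time --- admissible again by \eqref{eq:parametry}, which guarantees that $|\ths|^{\alpha}$ lies in the complementary space --- yields a bound of the form $D\,\norm{\ths}^{\alpha}_{\Lpr}\big(\norm{\ths}^{\beta}_{\Lpr}+\norm{\ths}^{\alpha}_{\Lpr}+1\big)$.

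The two remaining terms $\dt\epsp\cdot\sigma$ and $\pis{C}(\eps(\dt u))\cdot\eps(\dt u)$ are handled in the same way, now as products of two quantities that are both estimated directly in Lemma \ref{lem:ap1}: $\dt\epsp\in L^s(0,T;\Lq)$, $\sigma\in L^\infty(0,T;\Lq)$ and $\eps(\dt u)\in L^s(0,T;\Lq)$, each bounded by $E(T)\big(\norm{\ths}^{\beta}_{\Lpr}+\norm{\ths}^{\alpha}_{\Lpr}+\text{(data)}\big)$. H\"older in $x$ (pairing the two $L^q$-factors into an $L^{q/2}$-factor, which embeds into $L^r$) and in $t$ then gives $\lesssim\big(\norm{\ths}^{\beta}_{\Lpr}+\norm{\ths}^{\alpha}_{\Lpr}+1\big)^2$ for each of them. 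Expanding the products coming from all three estimates produces the powers $\norm{\ths}^{2\beta}_{\Lpr}$, $\norm{\ths}^{\alpha+\beta}_{\Lpr}$, $\norm{\ths}^{2\alpha}_{\Lpr}$ together with the strictly smaller powers $\norm{\ths}^{\alpha}_{\Lpr}$, $\norm{\ths}^{\beta}_{\Lpr}$ and a constant; since $0<\alpha,\beta<\tfrac12$ every exponent is $<1$, and the lower powers are absorbed into the three displayed ones and the ``$+1$'' by Young's inequality (e.g. $x^{\alpha}\le 1+x^{2\alpha}$ for $x\ge 0$). This is exactly the asserted inequality, with $D$ depending only on the length of $[0,T]$ and the data through \eqref{eq:ap1a}.

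The main obstacle is purely the bookkeeping of exponents: one must check that the H\"older pairings in space always close up into $L^r(\Omega)$ --- and since $\dyw\dt u$, $\sigma$ and $\dt\epsp$ are only controlled in $L^q(\Omega)$, this forces $q$ to be large enough relative to $r$, which is precisely where the choice of the Gr\"oger exponent $q$ and the relations \eqref{eq:parametry} are used --- while simultaneously respecting the time integrability (here the $L^\infty$-in-time bound on $\sigma$ is convenient). Keeping track of the fact that $\sigma$, $\epsp$ and $\dt u$ themselves depend on $\ths$ only \emph{sublinearly} is what guarantees that no resulting power of $\norm{\ths}_{\Lpr}$ exceeds $1$; this sublinearity is exactly the property that will later let the Schauder map send a sufficiently large ball of $\Lpr$ into itself in the proof of Theorem \ref{twr:main}.
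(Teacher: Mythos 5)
Your proposal follows essentially the same route as the paper: split $RHS$ into its three summands, apply H\"older with the $2p$/$2r$ pairing in time and space, bound $\phi(\ths)$ by $|\ths|^{\alpha}$ and use the relations \eqref{eq:parametry} to pass to $\norm{\ths}_{\Lpr}$, control the remaining factors by the estimate \eqref{eq:ap1a} of Lemma \ref{lem:ap1}, and absorb the lower powers via Young's inequality. The argument is correct and matches the paper's proof, including the observation that the space pairing must close up in $L^r(\Omega)$ given only $L^q(\Omega)$ control of $\sigma$, $\dt\epsp$ and $\nab\dt u$.
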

\begin{proof}Using estimates that was obtained before we can calculate as follows:
\begin{eqnarray*}
\lefteqn{\norm{RHS}_{L^p(0,T;L^r(\Omega))}}\\
&\leq&\norm{\phi(\ths)\dyw\dt u}_{L^p(0,T;L^r(\Omega))}+ \norm{\dt\epsp\cdot\sigma}_{L^p(0,T;L^r(\Omega))}+ \norm{\pis{C}(\eps(\dt u))\cdot \eps(\dt u)}_{L^p(0,T;L^r(\Omega))}\\
&\leq& \norm{\phi(\ths)}_{L^{2p}(0,T;L^{2r}(\Omega))}\norm{\dyw\dt u}_{L^{2p}(0,T;L^{2r}(\Omega))}+ \norm{\dt\epsp}_{L^{2p}(0,T;L^{2r}(\Omega))}\norm{\sigma}_{L^{2p}(0,T;L^{2r}(\Omega))}\\
&&+C\norm{\nab\dt u}^2_{L^{2p}(0,T;L^{2r}(\Omega))}\\
&\leq& C\Bigg(\norm{\ths}^\beta _{L^{2\beta p}(0,T;L^{2\beta r}(\Omega))}\norm{u}_{W^{1,s}(0,T;\Wqb)}+ \normsq{\dt\epsp}\normsq{\sigma}\\
&&\phantom{C\Big(}+ \norm{u}^2_{W^{1,s}(0,T;W^{1,q}(\Omega))}\Bigg)\\
&\leq& C(T) \norm{\ths}^\beta _{L^{p}(0,T;L^{r}(\Omega))}\Bigg(\norm{\ths}^{\beta}_{L^{\beta s}(0,T;L^{\beta q}(\Omega))}+ \norm{\ths}^{\alpha}_{L^{\alpha s}(0,T;L^{\alpha q}(\Omega))}+|\Omega||\Ld(0,0)|+\norm{\epsp_0}_{\Lq}\\
&&\phantom{C(T) \norm{\ths}^\beta _{L^{p}(0,T;L^{r}(\Omega))}\Bigg(}+\norm{b}_{W^{1,s}(0,T;\Lq)}+ \norm{g}_{W^{1,s}(0,T;W^{-\frac{1}{q},q}(\partial\Omega))}\Bigg)\\
&&+ C(T) \Bigg(\norm{\ths}^{\beta}_{L^{\beta s}(0,T;L^{\beta q}(\Omega))}+ \norm{\ths}^{\alpha}_{L^{\alpha s}(0,T;L^{\alpha q}(\Omega))}+ |\Omega||\Ld(0,0)|+\norm{\epsp_0}_{\Lq}\\
&&\phantom{+ C(T) \Bigg(}+\norm{u_0}_{\Wqg}+ \norm{b}_{W^{1,s}(0,T;\Lq)}+ \norm{g}_{W^{1,s}(0,T;W^{-\frac{1}{q},q}(\partial\Omega))}\Bigg)^2\\
&\leq& C(T) \Bigg(\norm{\ths}^{2\beta}_{L^{p}(0,T;L^{r}(\Omega))}+ \norm{\ths}^{2\alpha}_{L^{p}(0,T;L^{r}(\Omega))}+ \norm{\ths}^{\beta+\alpha}_{L^{p}(0,T;L^{r}(\Omega))} +\norm{u_0}^2_{\Wqg} + |\Omega|^2|\Ld(0,0)|^2\\
&&\phantom{C(T) \Bigg(}+\norm{\epsp_0}^2_{\Lq}+ \norm{b}^2_{W^{1,s}(0,T;\Lq)}+ \norm{g}^2_{W^{1,s}(0,T;W^{-\frac{1}{q},q}(\partial\Omega))}\Bigg).%
\end{eqnarray*}

\end{proof}
%
%
\begin{lem} Assume that {\bf A3} and {\bf A6} are satisfied and $\ths\in L^p(0,T;L^r(\Omega))$. Additionally if $r>3$ we assume the compatibility condition $CC$. Moreover let $\sigma\in L^s(0,T;L^q(\Omega;\Sy))$, $\epsp\in W^{1,s}(0,T;L^q(\Omega;\Sy))$ and $u\in W^{1,s}(0,T;W^{1,q}_{\Gamma_0}(\Omega;\Rz^3))$ be the solution to (AP1). Then the problem (AP2) possesses a unique solution $\theta\in L^p(0,T;W^{2,r}(\Omega))\cup W^{1,p}(0,T;L^r(\Omega))$. Furthermore the following estimate holds:
\begin{eqnarray}
\lefteqn{\norm{\theta}_{L^p(0,T;W^{2,r}(\Omega))}+\norm{\theta}_{W^{1,p}(0,T;L^r(\Omega))}}{}\nonumber\\
&\leq& C(T)\Big(\norm{RHS}_{L^p(0,T;L^r(\Omega))}+ \norm{\theta_0}_{B^{2(1-1/p)}_{rp}(\Omega)}+ \norm{h}_{F^{(r-1)/(2r)}_{pr}(0,T;L^r(\partial\Omega))}\label{eq:ap2}\\
&&\phantom{C(T)\Big(} + \norm{h}_{L^p(0,T;W^{1-1/r,r}(\partial\Omega))}\Big).\nonumber
\end{eqnarray}
\label{lem:ap2}
\end{lem}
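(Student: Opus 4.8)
The proof is an application of the $L^p$--$L^r$ maximal regularity theory for parabolic initial--boundary value problems with inhomogeneous Neumann data; the abstract result needed here, together with the precise description of the associated trace spaces, is contained in \cite{pruss}. The plan is therefore to check that the data of (AP2) lie in the spaces demanded by that theory, to verify the compatibility conditions, and to read off the asserted a~priori estimate.

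First I would collect the regularity of the inhomogeneities. The right-hand side $RHS=-\phi(\ths)\dyw\dt u+\dt\epsp\cdot\sigma+\pis{C}(\eps(\dt u))\cdot\eps(\dt u)$ belongs to $L^p(0,T;L^r(\Omega))$: since $(\sigma,\epsp,u)$ is the solution of (AP1) furnished by Lemma~\ref{lem:ap1} (which also supplies \eqref{eq:ap1a}), Lemma~\ref{lem:RHS} applies and already gives the quantitative bound on $\norm{RHS}_{L^p(0,T;L^r(\Omega))}$. By {\bf A6} the initial datum satisfies $\theta_0\in B^{2(1-1/p)}_{rp}(\Omega)$, which is exactly the temporal trace space at $t=0$ of the maximal regularity class $L^p(0,T;W^{2,r}(\Omega))\cap W^{1,p}(0,T;L^r(\Omega))$. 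By {\bf A3} the boundary datum satisfies $h\in F^{(r-1)/(2r)}_{pr}(0,T;L^r(\partial\Omega))\cap L^p(0,T;W^{1-1/r,r}(\partial\Omega))$, which is precisely the space in which the Neumann trace $\pn$ of an element of that same class lives, the spatial order $1-1/r$ and the parabolic time order $\tfrac{1}{2}(1-1/r)=\tfrac{r-1}{2r}$ being dictated by $\partial/\partial\vec{n}$ having order one.

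Second, the compatibility. The Neumann trace space just described possesses a well-defined temporal trace at $t=0$ exactly when its time-smoothness exceeds the threshold $1/p$, which under the standing relations \eqref{eq:parametry} happens precisely in the range $r>3$; in that regime one must impose $\frac{\partial\theta_0}{\partial\vec{n}}=h(0,\cdot)$ on $\partial\Omega$, i.e.\ the compatibility condition (CC), which is assumed. For $r\le 3$ no such condition is needed. With all hypotheses of the maximal regularity theorem met, it yields a unique $\theta\in L^p(0,T;W^{2,r}(\Omega))\cap W^{1,p}(0,T;L^r(\Omega))$ solving (AP2) together with the continuity estimate \eqref{eq:ap2}, the constant $C(T)$ depending on $T$, $\kappa$, $p$, $r$ and the geometry of $\Omega$; substituting the bound of Lemma~\ref{lem:RHS} completes the estimate.

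The only non-routine point is the bookkeeping of function spaces: one must make sure that the Besov exponent in {\bf A6}, the Triebel--Lizorkin and fractional Sobolev exponents in {\bf A3}, and the threshold $r>3$ triggering (CC) all match the data required by the cited linear theorem. Everything else --- existence, uniqueness, and the form of the estimate --- is immediate from that theorem.
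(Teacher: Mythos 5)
Your proposal is correct and follows essentially the same route as the paper: the authors likewise dispose of (AP2) by invoking the maximal $L^p$--$L^r$ regularity theory for the linear Neumann heat problem from \cite{pruss} (this is why they remark that ``its solvability is proved for example in \cite{pruss}'' and devote their effort only to bounding $RHS$ in Lemma~\ref{lem:RHS}), with {\bf A3}, {\bf A6} and (CC) matching the trace and compatibility requirements of that theory. Your additional bookkeeping of the Besov/Triebel--Lizorkin exponents and of the $r>3$ threshold is consistent with the paper's assumptions and adds nothing that conflicts with it.
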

%
%
By solving problems (AP1) and (AP2) we can define the operator 
\begin{align*}
\pis{T}\colon L^p(0,T;L^r(\Omega))&\longrightarrow  L^p(0,T;W^{2,r}(\Omega))\cup W^{1,p}(0,T;L^r(\Omega));\\
\pis{T}\colon \ths&\longmapsto\theta.
\end{align*}
\begin{stw}
The operator $\pis{T}\colon \Lp\rightarrow  L^p(0,T;W^{2,r}(\Omega))\cup W^{1,p}(0,T;L^r(\Omega))$ defined above is continuous.
\label{stw:T-ciagly}
\end{stw}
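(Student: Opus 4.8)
The plan is to prove that $\pis{T}$ is sequentially continuous, which — $\Lpr$ being metrizable — is the same as continuity. So fix $\ths\in\Lpr$ and a sequence $\ths_n\to\ths$ in $\Lpr$; denote by $(\sigma_n,\epsp_n,u_n)$, $(\sigma,\epsp,u)$ the corresponding solutions of (AP1) furnished by Lemma \ref{lem:ap1}, by $RHS_n$, $RHS$ the associated right-hand sides of (AP2), and set $\theta_n=\pis{T}(\ths_n)$, $\theta=\pis{T}(\ths)$. Since $\Omega$ and $(0,T)$ are bounded and, by \eqref{eq:parametry}, $\max\{\alpha,\beta\}s\le p$ and $\max\{\alpha,\beta\}q\le r$, the convergence $\ths_n\to\ths$ passes to $L^{\alpha s}(0,T;L^{\alpha q}(\Omega))$, to $L^{\beta s}(0,T;L^{\beta q}(\Omega))$ and (because $2\alpha,2\beta<1$) to the finer spaces $L^{2\alpha p}(0,T;L^{2\alpha r}(\Omega))$, $L^{2\beta p}(0,T;L^{2\beta r}(\Omega))$; as $|\phi(v)|\le|v|^\alpha$, the superposition operator $v\mapsto\phi(v)$ is bounded and continuous between the relevant mixed Lebesgue spaces, so that $\phi(\ths_n)\to\phi(\ths)$ in $L^s(0,T;\Lq)$ and in $L^{2p}(0,T;L^{2r}(\Omega))$, and all these sequences are bounded.

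\textbf{Step 1 (continuity of the solution map of (AP1)).} I would show that $(\sigma_n,\epsp_n,u_n)\to(\sigma,\epsp,u)$ in $L^s(0,T;\Lq)\times W^{1,s}(0,T;\Lq)\times W^{1,s}(0,T;\Wqg)$ by re-running, for the differences, the fixed-point/Gronwall argument of the proof of Lemma \ref{lem:ap1}. Since $u_n$ solves \eqref{eq:elast} with data $\phi(\ths_n)$ and with $\epsps=\epsp_n$, Corollary \ref{wni:LE} bounds $\norm{u_n-u}_{W^{1,s}(0,t;\Wqg)}$, and hence — through the constitutive law — $\norm{\sigma_n-\sigma}_{L^s(0,t;\Lq)}$, by $\norm{\phi(\ths_n)-\phi(\ths)}_{L^s(0,t;\Lq)}+\norm{\epsp_n-\epsp}_{L^s(0,t;\Lq)}$, while the Lipschitz/H\"older structure of $\Ld$ gives for a.e.\ $t$
\[
\norm{\epsp_n(t)-\epsp(t)}_{\Lq}\le L_1\int_0^t\norm{\sigma_n(\tau)-\sigma(\tau)}_{\Lq}\,d\tau+L_2\int_0^t\norm{\ths_n(\tau)-\ths(\tau)}^{\beta}_{L^{\beta q}(\Omega)}\,d\tau ,
\]
whose last integral is at most $C(T)\norm{\ths_n-\ths}^{\beta}_{L^{\beta s}(0,T;L^{\beta q}(\Omega))}$. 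Feeding the first set of bounds into this inequality and applying Gronwall's inequality on $(0,T)$ — with constants independent of $n$, thanks to the uniform a priori bounds of Lemma \ref{lem:ap1} — yields
\[
\norm{u_n-u}_{W^{1,s}(0,T;\Wqg)}+\norm{\epsp_n-\epsp}_{W^{1,s}(0,T;\Lq)}+\norm{\sigma_n-\sigma}_{L^s(0,T;\Lq)}\le C(T)\Big(\norm{\phi(\ths_n)-\phi(\ths)}_{L^s(0,T;\Lq)}+\norm{\ths_n-\ths}^{\beta}_{L^{\beta s}(0,T;L^{\beta q}(\Omega))}\Big),
\]
the right-hand side of which tends to $0$ by the preliminary observations.

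\textbf{Step 2 ($RHS_n\to RHS$ and conclusion).} Writing $RHS_n-RHS$ as the sum of the differences of the three products $-\phi(\ths)\dyw\dt u$, $\dt\epsp\cdot\sigma$ and $\pis{C}(\eps(\dt u))\cdot\eps(\dt u)$, and splitting each one in the form $a_nb_n-ab=(a_n-a)b_n+a(b_n-b)$, I would estimate every piece in $L^p(0,T;L^r(\Omega))$ by H\"older's inequality with exactly the exponents and continuous embeddings used in the proof of Lemma \ref{lem:RHS}: the "small" factors $\phi(\ths_n)-\phi(\ths)$, $\dt u_n-\dt u$, $\dt\epsp_n-\dt\epsp$, $\sigma_n-\sigma$ tend to $0$ in the pertinent norms by Step 1 and by the convergence of $\phi(\ths_n)$ in $L^{2p}(0,T;L^{2r}(\Omega))$, while the remaining factors are uniformly bounded by Lemmas \ref{lem:ap1} and \ref{lem:RHS}. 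Finally, the problem (AP2) is \emph{linear} in $RHS$ with $\theta_0,h$ frozen, so its solution operator is bounded from $L^p(0,T;L^r(\Omega))$ into $L^p(0,T;W^{2,r}(\Omega))\cap W^{1,p}(0,T;L^r(\Omega))$ by the estimate \eqref{eq:ap2} of Lemma \ref{lem:ap2}; hence $\theta_n\to\theta$ in that space, which proves the proposition.

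\textbf{Expected main obstacle.} The delicate point is Step 1: the $(AP1)$-solution depends on $\ths$ only through $\phi(\ths)$ and through $\Ld(\cdot,\ths)$, and since $\Ld$ is merely H\"older in temperature one can obtain only \emph{continuous}, not Lipschitz, dependence, so the Gronwall loop must be carried out with the genuinely nonlinear moduli $\norm{\phi(\ths_n)-\phi(\ths)}_{L^s(0,T;\Lq)}$ and $\norm{\ths_n-\ths}^{\beta}_{L^{\beta s}(0,T;L^{\beta q}(\Omega))}$. This forces one to verify (i) that the superposition operator $v\mapsto\phi(v)$ is continuous — not merely bounded — on the relevant mixed-norm spaces $L^{a}(0,T;L^{b}(\Omega))$ under the growth bound $|\phi(v)|\le|v|^\alpha$, and (ii) that every constant entering the iteration is independent of $n$, which holds because $\{\ths_n\}$ is bounded in $\Lpr$. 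Once Step 1 is secured, Step 2 is routine bookkeeping with H\"older's inequality and the linear parabolic estimate.
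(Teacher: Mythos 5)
Your proposal is correct and follows essentially the same route as the paper: you form the difference system for (AP1), close a Gronwall loop to bound $\norm{\sigma_n-\sigma}$, $\norm{\epsp_n-\epsp}$, $\norm{u_n-u}$ by $\norm{\ths_n-\ths}^{\beta}_{L^{\beta s}(0,T;L^{\beta q}(\Omega))}+\norm{\phi(\ths_n)-\phi(\ths)}_{L^s(0,T;\Lq)}$, invoke continuity (not Lipschitzness) of the superposition operator $v\mapsto\phi(v)$, split each product in $RHS_n-RHS$ as $(a_n-a)b_n+a(b_n-b)$ with the H\"older exponents of Lemma \ref{lem:RHS}, and finish with the linear parabolic estimate \eqref{eq:ap2}. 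The only difference is presentational (sequences versus the paper's $\varepsilon$--$\delta$ formulation with two fixed temperatures), and the ``delicate point'' you flag is exactly the one the paper handles by noting the boundedness and continuity of $\phi$ between the mixed-norm spaces and the uniformity of the constants on a ball in $\Lpr$.
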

\begin{proof}To prove continuity of the operator $\pis{T}$ we choose $\ths_1,\ths_2\in\Lq$ and we study the difference $\pis{T}(\ths_1)-\pis{T}(\ths_2)$. First we consider $(\sigma_1,\epsp_1,u_1)$ and $(\sigma_2,\epsp_2,u_2)$ solutions to (AP1) corresponding to $\ths_1,\ths_2$ with the same given data $b,f,g$. Let denote $\thsr:=\ths_1-\ths_2$ and, respectively, $\Sr:=\sigma_1-\sigma_2$, $\epspr:=\epsp_1-\epsp_2$ and $\ur:=u_1-u_2$. Notice that the functions $\Sr, \epspr,\ur$ satisfy the following system of the equations:
\begin{equation*}\left\{\begin{array}{rcl}
-\dyw\Sr(t,x)&=&0, \\
\Sr(t,x)&=&\pis{D}(\eps(\ur(t,x))-\epspr(t,x))+ \pis{C}(\eps(\dt \ur(t,x)))\\
&&-(\phi(\ths_1(t,x))-\phi(\ths_2(t,x)))\I,\\
\dt\epspr(t,x)&=&\Ld(\sigma_1,\ths_1)-\Ld(\sigma_2 ,\ths_2), \\
\ur(t,x)|_{\Gamma_0\times(0,T)}&=&0,\\
\Sr(t,x)\cdot\vec{n}(x)|_{\Gamma_1\times(0,T)}&=&0,\\
\ur(0,x)&=&0,\\
\epspr(x,0)&=&0.\end{array}\right.
\end{equation*}
Hence we can estimate as follows:
$$
|\epspr(t,x)|\leq L_1\int_0^t|\Sr(x,\tau)|d\tau+ L_2\int_0^t|\thsr(t,\tau)|^\beta d\tau.
$$
Additionally $\ur$ satisfies:
\begin{equation*}
\left\{\begin{array}{rcl}
\dyw\pis{D}(\eps(\ur(t,x)))+\dyw+\pis{C}(\eps(\dt \ur(t,x)))&=&\dyw\pis{D}(\epspr(t,x)),\\
\ur(t,x)|_{\Gamma_0\times(0,T)}&=&0,\\
\big(\pis{D}(\eps(\ur(t,x)))+\pis{C}(\eps(\dt \ur(t,x)))\big)\cdot\vec{n}(x)|_{\Gamma_1\times(0,T)}&=& \big(\pis{D}(\epspr(t,x))\\
&&+ (\phi(\ths_1(t,x))-\phi(\ths_2(t,x)))\I\big)\cdot\vec{n}(x)|_{\Gamma_1\times(0,T)},
\end{array}\right.
\end{equation*}
and similarly as in the proof of Lemma \ref{lem:ap1} we can estimate
\begin{eqnarray}
\norm{\ur}_{W^{1,s}(0,T;\Wqg}&\leq& C(T)\left(\norm{\epspr}_{W^{1,s}(0,T;L^q(\Omega))}+ \norm{\phi(\ths_1)-\phi(\ths_2)}_{L^s(0,T;\Lq)}\right)\nonumber\\
&\leq&C(T)\Big(\norm{\Sr}_{L^{s}(0,T;\Lq)}+\norm{\thsr}^\beta_{L^{\beta s}(0,T;L^{\beta q}(\Omega))}\label{eq:u_roz}\\
&&\phantom{C(T)\Big(}+ \norm{\phi(\ths_1)-\phi(\ths_2)}_{L^s(0,T;\Lq)}\Big).\nonumber
\end{eqnarray}
Next, a similar calculations as in (\ref{eq:sig1}) and (\ref{eq:epsp1}) lead us to:
\begin{equation*}
\norm{\Sr}_{L^s(0,T;\Lq}+ \norm{\epspr}_{L^s(0,T;\Lq)}\leq C(T)\left(\norm{\thsr}^\beta_{L^{\beta s}(0,T;L^{\beta q}(\Omega))}+ \norm{\phi(\ths_1)-\phi(\ths_2)}_{L^s(0,T;\Lq)}\right)\label{eq:S_epsp_roz}.
\end{equation*}
The estimate (\ref{eq:u_roz}) together with (\ref{eq:S_epsp_roz}) give us:
\begin{eqnarray}
\lefteqn{\norm{\ur}_{W^{1,s}(0,T;\Wqg}+ \norm{\Sr}_{L^s(0,T;\Lq}+ \norm{\epspr}_{L^s(0,T;\Lq)}}\nonumber\\
&&\leq C(T)\left(\norm{\thsr}^\beta_{L^{\beta s}(0,T;L^{\beta q}(\Omega))}+ \norm{\phi(\ths_1)-\phi(\ths_2)}_{L^s(0,T;\Lq)}\right).
\end{eqnarray}
Moreover one can observe that operator defined by function $\phi$ as $L^p(0,T;L^r(\Omega))\ni \theta\mapsto\phi(\phi)\in L^s(0,T;\Lq)$ is bounded and continuous. Therefore for any $\theta_1\in L^p(0,T;L^r(\Omega)$ and any $\epsilon>0$ there exists $\delta>0$ such that if $\norm{\thsr}_{L^p(0,T;L^r(\Omega)}<\delta$ then it holds that:
\begin{equation}
\norm{\ur}_{W^{1,s}(0,T;\Wqg}+ \norm{\Sr}_{L^s(0,T;\Lq}+ \norm{\epspr}_{L^s(0,T;\Lq)}\leq \epsilon.\label{eq:ciaglosc1}
\end{equation}
Next we consider the heat conduction equation (in fact the problem similar to (AP2)) for $RHS$ defined by $u_i,\sigma_i,\epsp_i,\ths_i$ where $i=1,2$:
\begin{equation*}\left\{\begin{array}{rcl}
\dt\theta_i(t,x)-\kappa\Delta\theta_i(t,x)&=&-\phi(\ths_i(t,x))\dyw\dt u_i(t,x)+\dt\epsp_i(t,x)\cdot\sigma_i(t,x)\\
&&+ \pis{C}(\eps(\dt u_i(t,x)))\cdot \eps(\dt u_i(t,x)),\\
\pn(t,x)|_{\partial\Omega\times(0,T)}&=&h(t,x),\\
\theta(x,0)&=&\theta_0(x)\end{array}\right.\qquad i=1,2.
\end{equation*}
Obviously the difference $\thr$ solves the following problem:
\begin{equation*}\left\{\begin{array}{rcl}
\dt\thr(t,x)-\kappa\Delta\thr(t,x)&=&-\phi(\ths_1(t,x))\dyw\dt u_1(t,x)+ \phi(\ths_2(t,x))\dyw\dt u_2(t,x)\\ &&+\dt\epsp_1(t,x)\cdot\sigma_1(t,x)- \dt\epsp_2(t,x)\cdot\sigma_2(t,x)\\
&&+ \pis{C}(\eps(\dt u_1(t,x)))\cdot \eps(\dt u_1(t,x))- \pis{C}(\eps(\dt u_2(t,x)))\cdot \eps(\dt u_2(t,x)),\\
\frac{\partial\thr}{\partial\vec{n}}(t,x)|_{\partial\Omega\times(0,T)}&=&0,\\
\thr(x,0)&=&0.\end{array}\right.
\end{equation*}
Hence we can estimate
\begin{eqnarray*}
\lefteqn{\norm{\thr}_{L^p(0,T;W^{1,r}(\Omega))}+\norm{\thr}_{W{1,p}(0,T;L^r(\Omega))}}\\
%
%
&\leq&C(T)\Big(\norm{\phi(\ths_1)\dyw\dt u_1-\phi(\ths_2)\dyw\dt u_2}_{L^p(0,T;L^r(\Omega))}+ \norm{\dt\epsp_1\cdot\sigma_1-\dt\epsp_2\cdot\sigma_2}_{L^p(0,T;L^r(\Omega))}\\
&&\phantom{C(T)\Big(}+ \norm{\pis{C}(\eps(\dt u_1))\cdot \eps(\dt u_1)- \pis{C}(\eps(\dt u_2))\cdot \eps(\dt u_2)}_{L^p(0,T;L^r(\Omega))}\Big)\\
%
%
&\leq&C(T)\Big(\norm{\phi(\ths_1)\dyw\dt \ur}_{L^p(0,T;L^r(\Omega))}+ \norm{(\phi(\ths_1)-\phi(\ths_2))\dyw\dt u_2}_{L^p(0,T;L^r(\Omega))}\\
&&\phantom{C(T)\Big(}+\norm{\dt\epsp_1\cdot\Sr}_{L^p(0,T;L^r(\Omega))}+ \norm{\dt\epspr\cdot\sigma_2}_{L^p(0,T;L^r(\Omega))}\\
&&\phantom{C(T)\Big(}+ \norm{\pis{C}(\eps(\dt u_1))\cdot \eps(\dt \ur)}_{L^p(0,T;L^r(\Omega))}+ \norm{\pis{C}(\eps(\dt \ur))\cdot \eps(\dt u_2)}_{L^p(0,T;L^r(\Omega))}\Big)\\
%
%
&\leq&C(T)\Big(\norm{\phi(\ths_1)}_{L^{2p}(0,T;L^{2r}(\Omega))}\norm{\dyw\dt \ur}_{L^{2p}(0,T;L^{2r}(\Omega))}\\
&&\phantom{C(T)\Big(}+ \norm{\phi(\ths_1)-\phi(\ths_2)}_{L^{2p}(0,T;L^{2r}(\Omega))}\norm{\dyw\dt u_2}_{L^p(0,T;L^r(\Omega))}\\
&&\phantom{C(T)\Big(}+\norm{\dt\epsp_1}_{L^{2p}(0,T;L^{2r}(\Omega))}\norm{\Sr}_{L^{2p}(0,T;L^{2r}(\Omega))}+ \norm{\dt\epspr}_{L^{2p}(0,T;L^{2r}(\Omega))}\norm{\sigma_2}^2_{L^{2p}(0,T;L^{2r}(\Omega))}\\
&&\phantom{C(T)\Big(}+ \norm{\eps(\dt u_1))}_{L^{2p}(0,T;L^{2r}(\Omega))}\norm{\eps(\dt \ur)}_{L^{2p}(0,T;L^{2r}(\Omega))}\\
&&\phantom{C(T)\Big(}+ \norm{\eps(\dt \ur)}_{L^{2p}(0,T;L^{2r}(\Omega))}\norm{ \eps(\dt u_2)}_{L^{2p}(0,T;L^{2r}(\Omega))}\Big).
\end{eqnarray*}
Now we can see that for $\delta>0$ small enough it holds that $\norm{\ths_2}_{L^p(0,T;L^r(\Omega))}\leq 2\norm{\ths_1}_{L^p(0,T;L^r(\Omega))}$ if only $\norm{\thsr}_{L^p(0,T;L^r(\Omega))}\leq\delta$. Hence using estimates (\ref{eq:ap1a}) and (\ref{eq:ciaglosc1}) we can write:
\begin{eqnarray*}
\lefteqn{\norm{\thr}_{L^p(0,T;W^{1,r}(\Omega))}+\norm{\thr}_{W{1,p}(0,T;L^r(\Omega))}}\\
%
%
&\leq&C(T)\Big(\norm{\nab\dt \ur}_{L^{2p}(0,T;L^{2r}(\Omega))}+ \norm{\phi(\ths_1)-\phi(\ths_2)}_{L^{2p}(0,T;L^{2r}(\Omega))}\\
&&\phantom{C(T)\Big(}+\norm{\Sr}_{L^{2p}(0,T;L^{2r}(\Omega))}+ \norm{\dt\epspr}_{L^{2p}(0,T;L^{2r}(\Omega))}\Big)\\
%
%
&\leq&C(T)\Big(\norm{\nab\dt \ur}_{L^{s}(0,T;L^{q}(\Omega))}+ \norm{\phi(\ths_1)-\phi(\ths_2)}_{L^{s}(0,T;L^{q}(\Omega))}\\
&&\phantom{C(T)\Big(}+\norm{\Sr}_{L^{s}(0,T;L^{q}(\Omega))}+ \norm{\dt\epspr}_{L^{s}(0,T;L^{q}(\Omega))}\Big)\\
&\leq&C(T)\epsilon\leq\widetilde{\epsilon},
\end{eqnarray*}
where constant $C(T)>0$ depends on a lenghth of the time interval $(0,T)$, norms of given data and the norm $\norm{\ths_1}_{L^p(0,T;L^r(\Omega))}$. For any given $\widetilde{\epsilon}>0$ we can choose such $\delta>0$ that the estimate above holds true. Therefore operator $\pis{T}$ is continuous. 
\end{proof}
%
\begin{stw}
the operator $\pis{T}\colon L^p(0,T;L^r(\Omega))\rightarrow L^p(0,T;L^r(\Omega))$ is compact.
\end{stw}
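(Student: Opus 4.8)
The plan is to combine the regularity gain already recorded in Lemma \ref{lem:ap2} with an Aubin--Lions compactness argument. Set
\[
X:=L^p\big(0,T;W^{2,r}(\Omega)\big)\cap W^{1,p}\big(0,T;L^r(\Omega)\big),
\]
so that, by Lemma \ref{lem:ap2}, $\pis{T}$ maps $L^p(0,T;L^r(\Omega))$ into $X$. The first point to establish is that $\pis{T}$ carries bounded subsets of $L^p(0,T;L^r(\Omega))$ into bounded subsets of $X$. If $\norm{\ths}_{L^p(0,T;L^r(\Omega))}\le M$, then Lemma \ref{lem:RHS} bounds $\norm{RHS}_{L^p(0,T;L^r(\Omega))}$ by a constant depending only on $M$, $T$ and the given data (this is where the parameter relations in \eqref{eq:parametry} and the bounds $\alpha,\beta<1/2$ are used, the latter making the exponents $2\alpha$, $2\beta$, $\alpha+\beta$ less than one); inserting this into the linear estimate \eqref{eq:ap2} yields a bound on $\norm{\pis{T}(\ths)}_X$ uniform over the ball of radius $M$.

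The second point is that $X$ embeds compactly into $L^p(0,T;L^r(\Omega))$. Since $\Omega\subset\Rz^3$ is bounded with smooth boundary, Rellich--Kondrachov gives the compact embedding $W^{2,r}(\Omega)\hookrightarrow\hookrightarrow L^r(\Omega)$, while $L^r(\Omega)\hookrightarrow L^r(\Omega)$ trivially. As $1<p<\infty$ by \eqref{eq:parametry}, the Aubin--Lions lemma, applied with $X_0=W^{2,r}(\Omega)$ and the intermediate and end space both equal to $L^r(\Omega)$, shows that $X\hookrightarrow\hookrightarrow L^p(0,T;L^r(\Omega))$.

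Putting the two together: given a bounded set $B\subset L^p(0,T;L^r(\Omega))$, its image $\pis{T}(B)$ is bounded in $X$ and hence relatively compact in $L^p(0,T;L^r(\Omega))$, so $\pis{T}$ is a compact operator; together with the continuity proved in Proposition \ref{stw:T-ciagly} this is exactly what is needed to run the Schauder fixed point theorem. I expect the main (though still routine) obstacle to be the careful matching of the mixed space--time Lebesgue exponents when transferring the estimate of Lemma \ref{lem:RHS} from a single function to a whole ball, together with the verification that the hypotheses of the Aubin--Lions lemma are met with the spaces occurring here; both are handled by a direct appeal to the standing assumptions \eqref{eq:parametry}.
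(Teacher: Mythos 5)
Your proof is correct and follows essentially the same route as the paper: both rest on the Aubin--Lions compact embedding of $L^p(0,T;W^{2,r}(\Omega))\cap W^{1,p}(0,T;L^r(\Omega))$ into $L^p(0,T;L^r(\Omega))$, combined with the regularity and boundedness supplied by Lemmas \ref{lem:RHS} and \ref{lem:ap2}. If anything, your write-up is slightly more complete, since you make explicit that $\pis{T}$ maps bounded sets into bounded subsets of the stronger space (which is what compactness actually requires), rather than only invoking the continuity of $\pis{T}$.
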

The reason of the fact above is the Aubin-Lions lemma {\it i.e.}:
$$
L^p(0,T;W^{2,r}(\Omega))\cup W^{1,p}(0,T;L^r(\Omega))\hookrightarrow\hookrightarrow L^p(0,T;L^r(\Omega)).
$$
The embedding above together with Proposition \ref{stw:T-ciagly} give as a compactness for the operator $\pis{T}$ in required spaces.
\begin{lem}
There exists $\theta\in L^p(0,T;L^r(\Omega))$ such that $T(\theta)=\theta$.
\end{lem}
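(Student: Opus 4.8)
The plan is to obtain the fixed point from the Schauder fixed point theorem applied to $\pis{T}$ on a large ball of $\Lpr$. By Proposition~\ref{stw:T-ciagly} the operator $\pis{T}\colon\Lpr\to\Lpr$ is continuous, and by the Aubin--Lions compact embedding $L^p(0,T;W^{2,r}(\Omega))\cap W^{1,p}(0,T;L^r(\Omega))\hookrightarrow\hookrightarrow\Lpr$ recalled above it is compact, i.e.\ it sends bounded sets into relatively compact ones. Hence the only thing still to be done is to exhibit a nonempty, convex, bounded and closed set $K\subset\Lpr$ with $\pis{T}(K)\subseteq K$; I would take $K$ to be a closed ball centred at the origin and determine its radius from the a~priori estimates already proved.

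To fix the radius I would chain those estimates. Given $\ths\in\Lpr$ with associated solution $(\sigma,\epsp,u)$ of (AP1), estimate \eqref{eq:ap2} of Lemma~\ref{lem:ap2} bounds $\norm{\pis{T}(\ths)}$ in the norm of $L^p(0,T;W^{2,r}(\Omega))\cap W^{1,p}(0,T;L^r(\Omega))$ by $C(T)\norm{RHS}_{L^p(0,T;L^r(\Omega))}$ plus terms involving only $\theta_0$ and $h$, which are finite by \textbf{A3} and \textbf{A6}; and Lemma~\ref{lem:RHS} bounds $\norm{RHS}_{L^p(0,T;L^r(\Omega))}$ by $D\big(\norm{\ths}^{2\beta}_{\Lpr}+\norm{\ths}^{2\alpha}_{\Lpr}+\norm{\ths}^{\beta+\alpha}_{\Lpr}+1\big)$. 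Since $L^p(0,T;W^{2,r}(\Omega))\hookrightarrow\Lpr$ continuously, this yields a constant $\widehat C>0$, depending only on $T$ and the data \textbf{A1}--\textbf{A6}, with
$$
\norm{\pis{T}(\ths)}_{\Lpr}\le\widehat C\big(\norm{\ths}^{2\beta}_{\Lpr}+\norm{\ths}^{2\alpha}_{\Lpr}+\norm{\ths}^{\beta+\alpha}_{\Lpr}+1\big)\qquad\text{for every }\ths\in\Lpr .
$$
At this point I would use the standing assumption $0<\alpha,\beta<\sfrac{1}{2}$ of \eqref{eq:parametry}, which forces the three exponents $2\beta,2\alpha,\beta+\alpha$ to be strictly less than $1$, so the scalar map $\rho\mapsto\widehat C(\rho^{2\beta}+\rho^{2\alpha}+\rho^{\beta+\alpha}+1)$ is sublinear and there exists $R>0$ with $\widehat C(R^{2\beta}+R^{2\alpha}+R^{\beta+\alpha}+1)\le R$. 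Fixing this $R$ and putting $K:=\{\ths\in\Lpr\colon\norm{\ths}_{\Lpr}\le R\}$, the displayed inequality gives $\pis{T}(K)\subseteq K$; as $K$ is nonempty, convex, bounded and closed, $\pis{T}|_K$ continuous and $\pis{T}(K)$ relatively compact, the Schauder fixed point theorem furnishes $\theta\in K$ with $\pis{T}(\theta)=\theta$.

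The delicate point is precisely the self-mapping property $\pis{T}(K)\subseteq K$: the quadratic-type products $\phi(\ths)\dyw\dt u$ and $\dt\epsp\cdot\sigma$ on the right-hand side of the heat equation are controlled only because the sublinear growth $|\phi(s)|\le|s|^\alpha$ and the H\"older exponent $\beta$ of $\Lambda$, together with $\alpha,\beta<\sfrac{1}{2}$ and the compatibility relations $r\ge\max\{\alpha,\beta\}q$, $p\ge\max\{\alpha,\beta\}s$, keep every power of $\norm{\ths}_{\Lpr}$ below one; if $\alpha$ or $\beta$ were at least $\sfrac{1}{2}$ an exponent $\ge 1$ would appear and no invariant ball need exist. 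Finally I would note that, by construction, the triple $(\sigma,\epsp,u)$ associated with $\ths=\theta$ together with this $\theta$ solves (TP)+(BC)+(IC) in the regularity classes asserted in Theorem~\ref{twr:main}, so this fixed point statement completes the proof of the main result.
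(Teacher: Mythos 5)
Your proposal is correct and follows essentially the same route as the paper: both chain the estimates of Lemma~\ref{lem:RHS} and Lemma~\ref{lem:ap2} to bound $\norm{\pis{T}(\ths)}_{\Lpr}$ by a sublinear function of $\norm{\ths}_{\Lpr}$ (using $2\alpha,2\beta,\alpha+\beta<1$), deduce that a sufficiently large closed ball is invariant, and conclude with the Schauder fixed point theorem via the continuity and Aubin--Lions compactness already established. The only cosmetic difference is that the paper absorbs the sublinear powers via $D(T)(M^{2\beta}+M^{2\alpha}+M^{\beta+\alpha}+1)\le E(T)+\frac{1}{2}M$ for $M\ge 2E(T)$, while you argue directly that the scalar map $\rho\mapsto\widehat C(\rho^{2\beta}+\rho^{2\alpha}+\rho^{\beta+\alpha}+1)$ eventually drops below the identity.
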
 
\begin{proof}We use the Schauder fixed point theorem. Let us fixed $\ths\in L^p(0,T;L^r(\Omega))$ such that $\norm{\ths}_{L^p(0,T;L^r(\Omega))}\leq M$ for some $M>0$ which will be indicated later. We are going to prove that $\norm{\pis{T}(\ths)}_{L^p(0,T;L^r(\Omega))}=\norm{\theta}_{L^p(0,T;L^r(\Omega))}\leq M$. From estimates from Lemma \ref{lem:RHS} and Lemma \ref{lem:ap2} we immediately obtain that:
\begin{eqnarray*}
\lefteqn{\norm{\theta}_{L^p(0,T;L^r(\Omega))}}{}\\
&&\leq\norm{\theta}_{L^p(0,T;W^{2,r}(\Omega))}+\norm{\theta}_{W^{1,p}(0,T;L^r(\Omega))}\\
&\quad&\leq C(T)\left(\norm{\ths}^{2\beta}_{L^{p}(0,T;L^{r}(\Omega))}+ \norm{\ths}^{2\alpha}_{L^{p}(0,T;L^{r}(\Omega))}+ \norm{\ths}^{\beta+\alpha}_{L^{p}(0,T;L^{r}(\Omega))}\right.\\
&&\phantom{\leq C(T)M}+\left.|\Omega|^2|\Ld(0,0)|^2+\norm{\epsp_0}^2_{\Lq}+ \norm{b}^2_{W^{1,s}(0,T;\Lq)}+ \norm{g}^2_{W^{1,s}(0,T;W^{-\frac{1}{q},q}(\partial\Omega))}\right.\\
&&\phantom{\leq C(T)M}+\left.\norm{\theta_0}_{B^{2(1-1/p)}_{rp}(\Omega)}+ \norm{h}_{F^{(r-1)/(2r)}_{pr}(0,T;L^r(\partial\Omega))} + \norm{h}_{L^p(0,T;W^{1-1/r,r}(\partial\Omega))}\right)\\
&\quad&\leq D(T)\left(M^{2\beta}+ M^{2\alpha}+ M^{\beta+\alpha}+ 1\right)\\
&\quad&\leq E(T)+\frac{1}{2}M\\
&\quad&\leq M
\end{eqnarray*}
if only
$$
M\geq 2E(T).
$$
Here the constant $E(T)>0$ does depend only on given boundary and initial data, entries of the operators $\pis{D}$ and $\pis{C}$, geometry of the domain $\Omega$ and the length of the time interval $(0,T)$.

Hence for such a constant $M$ the compact operator $\pis{T}\left(\mathbf{B}(0,M)\right)\subset \mathbf{B}(0,M)$, where $$\mathbf{B}(0,M):=\{\xi\in L^p(0,T;L^r(\Omega))\,|\, \norm{\xi}_{L^p(0,T;L^r(\Omega))}\leq M\}$$. Finally we employ the Schauder fixed point theorem to end the proof.
\end{proof}
\begin{wni}
The consideration above proves the main result of this paper {\it i.e.} Theorem \ref{twr:main}.
\end{wni}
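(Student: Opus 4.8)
The plan is to collect the preceding lemmas and propositions into a single application of Schauder's fixed point theorem on the Banach space $X := L^p(0,T;L^r(\Omega))$. First I would note that the operator $\pis{T}$ is well defined on all of $X$: given $\ths\in X$, Lemma \ref{lem:ap1} produces a unique triple $(\sigma,\epsp,u)$ solving (AP1) with $\sigma\in L^\infty(0,T;\Lq)$, $\epsp\in W^{1,s}(0,T;\Lq)$ and $u\in W^{1,s}(0,T;\Wqg)$; Lemma \ref{lem:RHS} then bounds the resulting right-hand side of the heat equation in $L^p(0,T;L^r(\Omega))$; and Lemma \ref{lem:ap2} yields a unique $\theta=\pis{T}(\ths)\in L^p(0,T;W^{2,r}(\Omega))\cap W^{1,p}(0,T;L^r(\Omega))$. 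So $\pis{T}$ maps $X$ into a space that, by the Aubin--Lions lemma, embeds compactly back into $X$.

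Next I would verify the three hypotheses of Schauder's theorem. Continuity of $\pis{T}\colon X\to X$ is exactly Proposition \ref{stw:T-ciagly}; compactness follows from the compact embedding just mentioned composed with that continuity. For invariance of a ball, chaining the estimates of Lemmas \ref{lem:RHS} and \ref{lem:ap2} gives, whenever $\norm{\ths}_X\le M$, a bound of the shape $\norm{\pis{T}(\ths)}_X\le D(T)\bigl(M^{2\beta}+M^{2\alpha}+M^{\alpha+\beta}+1\bigr)$ with $D(T)$ depending only on $T$ and the data. Since \eqref{eq:parametry} forces $\alpha,\beta<\tfrac12$, every exponent here is strictly below $1$, so choosing $M$ large enough (depending only on $T$ and the data) makes the right-hand side $\le M$; hence $\pis{T}$ maps the closed ball $\mathbf{B}(0,M)\subset X$ into itself. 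Schauder's theorem then provides a fixed point $\theta\in\mathbf{B}(0,M)$ with $\pis{T}(\theta)=\theta$.

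Finally I would check that this fixed point furnishes a genuine solution of (TP)+(BC)+(IC). Setting $\ths:=\theta$ in (AP1), the triple $(\sigma,\epsp,u)$ satisfies the first three equations of (TP) together with the Dirichlet condition on $\Gamma_0$, the traction condition on $\Gamma_1$, and the initial conditions for $u$ and $\epsp$. Putting the same $\ths=\theta$ into (AP2), the identity $\pis{T}(\theta)=\theta$ says exactly that $\theta$ solves the heat conduction equation of (TP) --- now with $\phi(\ths)=\phi(\theta)$ and the very same $(\sigma,\epsp,u)$ on the right-hand side --- together with $\pn=h$ on $\partial\Omega$ and $\theta|_{t=0}=\theta_0$. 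Thus $(u,\epsp,\theta)$ solves the coupled system, and the stated regularity is precisely what Lemmas \ref{lem:ap1} and \ref{lem:ap2} deliver (with standard embeddings and the relations \eqref{eq:parametry} used to place $\theta$ in $L^r(0,T;W^{1,r}(\Omega))$ as well). The real obstacle in this whole scheme --- everything else being routine once it is overcome --- is the $L^p(0,T;L^r(\Omega))$ control of the nonlinear ``troublemaker'' terms $\phi(\theta)\dyw\dt u$ and $\dt\epsp\cdot\sigma$: this is exactly what dictates the parameter constraints \eqref{eq:parametry} (so that Hölder's inequality together with the sublinearity bounds on $\phi$ and $\Lambda$ close up) and what makes the ball-invariance estimate work; it is handled in Lemma \ref{lem:RHS}, after which the assembly above goes through.
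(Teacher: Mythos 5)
Your proposal is correct and follows essentially the same route as the paper: the operator $\pis{T}$ built from (AP1) and (AP2), continuity from Proposition \ref{stw:T-ciagly}, compactness via Aubin--Lions, ball invariance from the sublinear exponents $2\alpha,2\beta,\alpha+\beta<1$ guaranteed by \eqref{eq:parametry}, and Schauder's theorem to produce the fixed point. Your closing verification that the fixed point actually solves (TP)+(BC)+(IC) is left implicit in the paper, but it is the same argument.
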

\begin{uwa}
It is possible to consider the model with non-homogeneous boundary condition in Dirichlet part. {\it i.e.}
$$
u(t,x)=f(t,x)\qquad \text{for}\; (t,x)\in(0,T)\times\Gamma_0.
$$
It is enough to assume that the function $f$ can be expanded to a function $F\in W^{1,s}(0,T;W^{1,q}(\Omega;\Rz^3))$.
\end{uwa}
\begin{uwa}
We would treat the field $\Lambda$ in evolution equation for the plastic part of the strain tensor as Yosida approximation to subdifferential, but in that case we need to obtain estimates independent of the Lipschitz constant $L_1$. It seems it should be possible if we handle with  Boccardo-Gallou\"et theory for the parabolic equation with the right-hand side in $L^1$ (c.f. \cite{bocc89}).
\end{uwa}
%
%

\bibliographystyle{elsarticle-harv}
\bibliography{szkic4}{}

\end{document}